\tikzset{
every node/.style={circle, draw, inner sep=2pt},
every picture/.style={thick}
}
\newtheorem{theorem}{Theorem}
\newtheorem{lemma}[theorem]{Lemma}
\newtheorem{claim}[theorem]{Claim}
\newtheorem{proposition}[theorem]{Proposition}
\newtheorem{corollary}[theorem]{Corollary}
\theoremstyle{definition}
\newtheorem{definition}[theorem]{Definition}
\newtheorem{observation}[theorem]{Observation}
\newtheorem{remark}[theorem]{Remark}
\newtheorem{example}[theorem]{Example}
\newtheorem{question}[theorem]{Question}
\newcommand{\diag}{\operatorname{diag}}
\newcommand{\minors}{\operatorname{minors}}
\newcommand{\Ga}{{\sf bull}}
\newcommand{\Gb}{{\sf dart}}
\newcommand{\Gc}{{\sf house}}
\newcommand{\Gd}{{\sf gem}}
\newcommand{\Ge}{{\sf full-house}}
\newcommand{\Gf}{{\sf G_{6,5}}}
\newcommand{\Gg}{{\sf 5-pan}}
\newcommand{\Gh}{{\sf G_{6,7}}}
\newcommand{\Gi}{{\sf G_{6,8}}}
\newcommand{\Gj}{{\sf G_{6,9}}}
\newcommand{\Gk}{{\sf G_{6,10}}}
\newcommand{\Gl}{{\sf co-twin-house}}
\newcommand{\Gm}{{\sf G_{6,12}}}
\newcommand{\Gn}{{\sf co\text{-}twin\text{-}C_5}}
\newcommand{\Go}{{\sf G_{6,14}}}
\newcommand{\Gp}{{\sf C_7}}
\newcommand{\Gq}{{\sf G_{6,15}}}
\begin{document}

\title{On graphs with 2 trivial distance ideals}
\author{Carlos A.  Alfaro}
\address{
Banco de M\'exico,
Mexico City,
Mexico
}
\email{alfaromontufar@gmail.com}
\thanks{Carlos A. Alfaro was partially supported by CONACyT and SNI}
\keywords{distance ideals, distance matrix, forbidden induced subgraph, Smith normal form.}
\subjclass[2000]{05C25, 05C50, 05E99, 13P15, 15A03, 68W30.}
\date{\today}

\maketitle

\begin{abstract}
Distance ideals generalize the Smith normal form of the distance matrix of a graph.
The family of graphs with 2 trivial distance ideals contains the family of graphs whose distance matrix has at most 2 invariant factors equal to 1.
Here we give an infinite family of forbidden induced subgraphs for the graphs with 2 trivial distance ideals.
These are also related with other well known graph classes.

% {\color{red}
% Summary of the possible work:
% \begin{enumerate}
%     \item Prove the 17 graphs are forbidden minimal,
%     \item This implies graphs with 2 distance ideals are in this free family,
%     \item Prove Trees are in this family; this is interesting since bigger families like bull-free graphs contain trees. This is another approach.
%     \item Compute distance ideals of trees. For this we could try to follow the computation of the SNF of distance matrix of trees, done previously in ... (Distance unimodular equivalence of graphs)
%     \item we have included two paper from which we could find something interesting to say: Spectral properties of distance matrix of graphs, (for introduction, also see distance survey). The other is 3-leaf powers
% \end{enumerate}
% }
\end{abstract}

\section{Introduction}
Through the paper all graphs will be considered connected.
Let $G=(V,E)$ be a connected graph and $X_G=\{x_u \, : \, u\in V(G)\}$ a set of indeterminates.
The {\it distance} $d_G(u,v)$ between the vertices $u$ and $v$ is the number of edges of a shortest path between them.
Let $\diag(X_G)$ denote the diagonal matrix with the indeterminates in the diagonal.
The {\it distance matrix} $D(G)$ of $G$ is the matrix with rows and columns indexed by the vertices of $G$ where the $uv$-entry is equal to $d_G(u,v)$.
Thus the {\it generalized distance matrix} $D(G,X_G)$ of $G$ is the matrix with rows and columns indexed by the vertices of $G$ defined as $\diag(X_G)+D(G)$.
Note we can recover the distance matrix from the generalized distance matrix by evaluating $X_G$ at the zero vector, that is, $D(G)=D(G,\bf{0})$.

Let $\mathcal{R}[X_G]$ be the polynomial ring over a commutative ring $\mathcal{R}$ in the variables $X_G$.
For all $i\in[n]:=\{1,..., n\}$, the $i${\it-th distance ideal} $I^\mathcal{R}_i(G,X_G)$ of $G$ is the ideal, over $\mathcal{R}[X_G]$, given by
$\langle \minors_i(D(G,X_G))\rangle$,
where $n$ is the number of vertices of $G$ and ${\rm minors}_i(D(G,X_G))$ is the set of the determinants of the $i\times i$ submatrices of $D(G,X_G)$.

% An interesting property is that distance ideals are unique up to isomorphism.
% Let $\sigma$  be a permutation on $V(G)$.
% Then $\sigma G$ is a graph on $V(G)$ such that $\{i,j\}\in E(G)$ if and only if $\{ \sigma(i),\sigma(j)\}\in E(\sigma G)$.
% Two graphs $G$ and $G'$ on the same vertex set $V$ are called $n$-{\it distance cospectral} if there exists a permutation $\sigma$ on $V$ such that $\det(D(G,X_G))=\det(D(\sigma G',X_{\sigma G'}))$.
% Since $\det(D(G,X_G)) = \sum_{\sigma\in S_n} \sign (\sigma)\prod_{i=1}^n D(G,X_G)_{i,\sigma(i)}$, it is not difficult to see that there is a bijection between the edges of $G$ and monomials of degree $n-2$ in $\det(D(G,X_G))$ given by $\{i,j\} \mapsto -d_G(i,j)^2\prod_{k\neq i,j}x_k$.
% Then we have the following result.
% \begin{proposition}
% 	Let $G$ and $G'$ be two graphs with $n$ vertices.
% 	Then $G$ and $G'$ are isomorphic if and only if they are $n$-distance cospectral.
% \end{proposition}

% In particular, we have the following.

% \begin{corollary}
% 	Let $G$ and $G'$ be two graphs with $n$ vertices.
% 	Then $G$ and $G'$ are isomorphic if and only if there exists a permutation $\sigma$ on $V$ such that  $I^{\mathcal{R}}_n(G,X_G)= I^{\mathcal{R}}_n(\sigma G',X_{\sigma G'})$.
% \end{corollary}

Distance ideals were defined in \cite{at} as a generalization of the Smith normal form of distance matrix and the distance spectra of graphs.
Through this paper will be interested in the case when $\mathcal{R}$ is the ring of integer numbers.
For this reason we will omit $\mathcal{R}$ in the notation of distance ideal.

\subsection{Distance ideals and Smith normal form of distance matrix}

Smith normal forms have been useful in understanding algebraic properties of combinatorial objects see \cite{stanley}.
For instance, computing the Smith normal form of the adjacency or Laplacian matrix is a standard technique used to determine the Smith group and the critical group of a graph, see \cite{alfaval0,merino,rushanan}.

Smith normal forms can be computed using row and column operations.
In fact, Kannan and Bachem found in \cite{KB} polynomial algorithms for computing the Smith normal form of an integer matrix.
An alternative way of obtaining the Smith normal form is as follows.
Let $\Delta_i(G)$ denote the {\it greatest common divisor} of the $i$-minors of the distance matrix $D(G)$, then its $i$-{\it th} invariant factor, $f_i$, is equal to $\Delta_i(G)/ \Delta_{i-1}(G)$, where $\Delta_0(G)=1$.
Thus the Smith normal form of $D(G)$ is equal to $\diag(f_1,f_2, \dots, f_r,0, \dots,0)$.

Little is known about the Smith normal forms of distance matrices.
In \cite{HW}, the Smith normal forms of the distance matrices were determined for trees, wheels, cycles, and complements of cycles and are partially determined for complete multipartite graphs.
%Is this how we want to say this?
In \cite{BK}, the Smith normal form of the distance matrices of
unicyclic graphs and of the wheel graph with trees attached to each
vertex were obtained.

An ideal is said to be {\it unit} or {\it trivial} if it is equal to $\langle1\rangle$. %(equivalently, if it is equal to $\mathcal{R}[X]$).
%Note that $I_1(G,X_G)\supseteq I_2(G,X_G)\supseteq \dots\supseteq I_n(G,X_G)$.
Let $\Phi(G)$ denote the maximum integer $i$ for which $I^\mathbb{Z}_i(G,X_G)$ is trivial.
Let $\Lambda_k$ denote the family of graphs with at most $k$ trivial distance ideals over $\mathbb{Z}$.
Note that every graph with at least one non-loop edge has at least one trivial distance ideals.
On the other hand, let $\phi(G)$ denote the number of invariant factors of the distance matrix of $G$ equal to 1.
In this case, every graph with at least one non-loop edge has at least two invariant factors equal to one.

The following observation will give us the relation between the Smith normal form of the distance matrix and the distance ideals.
\begin{proposition}\cite{at}\label{prop:eval1}
Let ${\bf d}\in \mathbb{Z}^{V(G)}$.
If $f_1\mid\cdots\mid f_{r}$ are the invariant factors of the integer matrix $D(G,{\bf d})$, then
\[
I_i(G,{\bf d})=\left\langle \prod_{j=1}^{i} f_j \right\rangle\text{ for all }1\leq i\leq r.
\]
\end{proposition}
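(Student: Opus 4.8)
The plan is to reduce the claim to the classical description of the invariant factors of an integer matrix in terms of its determinantal divisors, which the excerpt already recalls in the paragraph preceding the statement.

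First I would note that $\mathbb{Z}$ is a principal ideal domain, so the ideal $I_i(G,{\bf d}) = \langle \minors_i(D(G,{\bf d})) \rangle$ generated by the collection of $i\times i$ minors of the integer matrix $D(G,{\bf d})$ is itself principal, and is generated by the greatest common divisor $\Delta_i$ of those minors (with the convention $\Delta_0 := 1$). Thus it suffices to show $\Delta_i = \prod_{j=1}^{i} f_j$ for all $1 \le i \le r$.

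For that, I would invoke the fact --- valid for any integer matrix, not just a distance matrix --- that if $P, Q \in \mathrm{GL}_n(\mathbb{Z})$ are unimodular and $S = \diag(f_1,\dots,f_r,0,\dots,0)$ is the Smith normal form of $D(G,{\bf d})$, so that $D(G,{\bf d}) = P S Q$, then $\Delta_i(D(G,{\bf d})) = \Delta_i(S)$. The one-line reason is the Cauchy--Binet formula: every $i\times i$ minor of $PSQ$ is a $\mathbb{Z}$-linear combination of $i\times i$ minors of $S$, whence $\Delta_i(S) \mid \Delta_i(PSQ)$; applying the same to $S = P^{-1}(PSQ)Q^{-1}$ gives the reverse divisibility, so the two gcd's generate the same ideal. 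Finally, since $S$ is diagonal its $i\times i$ minors are precisely the products of $i$ of its diagonal entries, and because $f_1 \mid f_2 \mid \cdots \mid f_r$ each such product is a multiple of $f_1 \cdots f_i$, while $f_1\cdots f_i$ is itself one of these minors; hence $\Delta_i(S) = f_1 \cdots f_i$. Combining these identities yields $I_i(G,{\bf d}) = \langle \Delta_i \rangle = \langle \prod_{j=1}^{i} f_j \rangle$, as desired. Equivalently, one may simply telescope the relation $f_j = \Delta_j/\Delta_{j-1}$ recalled in the introduction.

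I do not anticipate a genuine obstacle: the content is entirely the standard Smith normal form theory over a PID. The only points needing a little care are that gcd's, and hence these ideals, are only well defined up to units, so all the equalities above should be read as equalities of ideals, and that one must run the Cauchy--Binet comparison in both directions to get equality of determinantal divisors rather than a single divisibility.
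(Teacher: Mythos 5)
Your argument is correct: it is the standard Smith-normal-form computation over a PID (principality of the minor ideal, invariance of determinantal divisors under unimodular equivalence via Cauchy--Binet in both directions, and the evaluation $\Delta_i(S)=f_1\cdots f_i$ for the diagonal form), and it handles the only delicate points --- units and the two-sided divisibility --- properly. The paper itself states this proposition without proof, citing \cite{at}, and your proof is exactly the expected one.
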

Thus to recover $\Delta_i(G)$ and $f_i$ from the distance ideals, we just need to evaluate $I_i(G,X_G)$ at $X_G={\bf 0}$.
Therefore, if the distance ideal $I_i(G,X_G)$ is trivial, then $\Delta_i(G)$ and $f_i$ are equal to $1$.
Equivalently, if $\Delta_i(G)$ and $f_i$ are not equal to $1$, then the distance ideal $I_i(D, X_D)$ is not trivial.
%In this way, distance ideals can be regarded as a generalization of the invariant factors of the distance matrix.

\begin{corollary}\cite{at}\label{coro:eval1}
    For any graph $G$, $\Phi_\mathcal{R}(G)\leq \phi_\mathcal{R}(G)$.
    And, for any positive integer $k$, $\Lambda_k$ contains the family of graphs with $\phi_\mathcal{R}(G)\leq k$.
\end{corollary}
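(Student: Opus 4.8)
The plan is to obtain both assertions as essentially immediate consequences of Proposition \ref{prop:eval1}, applied at the evaluation point $\mathbf{d}=\mathbf{0}$, together with the elementary fact that the invariant factors of an integer matrix (more generally, a matrix over a principal ideal domain) are totally ordered by divisibility. The guiding idea is precisely the one sketched in the paragraph preceding the statement: triviality of a distance ideal is preserved under the substitution $X_G\mapsto\mathbf{0}$, and after this substitution Proposition \ref{prop:eval1} identifies the evaluated ideal with the one generated by a product of invariant factors of $D(G)$.

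Concretely, I would first fix $i=\Phi_\mathcal{R}(G)$, so that $I_i(G,X_G)=\langle 1\rangle$ by definition. The substitution $X_G\mapsto\mathbf{0}$ defines a ring homomorphism $\mathcal{R}[X_G]\to\mathcal{R}$ carrying each $i\times i$ minor of $D(G,X_G)$ to the corresponding minor of $D(G,\mathbf{0})=D(G)$; since these minors generate $I_i(G,X_G)$, the image of $I_i(G,X_G)$ under this homomorphism is $I_i(G,\mathbf{0})$, while the image of $\langle 1\rangle$ is $\langle 1\rangle$, whence $I_i(G,\mathbf{0})=\langle 1\rangle$. In particular $I_i(G,\mathbf{0})\neq\langle 0\rangle$, so $D(G)$ has a nonvanishing $i\times i$ minor and hence at least $i$ nonzero invariant factors, which lets us apply Proposition \ref{prop:eval1} at $\mathbf{d}=\mathbf{0}$: it yields $\left\langle\prod_{j=1}^{i}f_j\right\rangle=\langle 1\rangle$, where $f_1\mid\cdots\mid f_r$ are the invariant factors of $D(G)$. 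Thus $\prod_{j=1}^{i}f_j$ is a unit, forcing $f_1=\cdots=f_i=1$, so $D(G)$ has at least $i$ invariant factors equal to $1$; that is, $\phi_\mathcal{R}(G)\ge i=\Phi_\mathcal{R}(G)$, which is the first claim. (It is also convenient to record, via Laplace expansion, that $I_i(G,X_G)\subseteq I_{i-1}(G,X_G)$ for every $i$, so that the indices with trivial distance ideal form an initial segment $\{1,\dots,\Phi_\mathcal{R}(G)\}$ and the notation $\Phi_\mathcal{R}$ is well posed; this monotonicity is not actually needed for the inequality itself.)

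The second claim then follows with no extra work: if $\phi_\mathcal{R}(G)\le k$, then by the inequality just proved $\Phi_\mathcal{R}(G)\le\phi_\mathcal{R}(G)\le k$, so $G$ has at most $k$ trivial distance ideals and hence $G\in\Lambda_k$; therefore $\Lambda_k$ contains the entire family of graphs with $\phi_\mathcal{R}(G)\le k$. I do not anticipate a genuine obstacle here, since the argument is a direct application of Proposition \ref{prop:eval1}; the only point that deserves a line of care is the verification that evaluating the generating set of $I_i(G,X_G)$ (the $i\times i$ minors) at $\mathbf{0}$ actually generates $I_i(G,\mathbf{0})$, rather than merely a subideal of it — this is exactly what justifies the implication $I_i(G,X_G)=\langle 1\rangle\Rightarrow I_i(G,\mathbf{0})=\langle 1\rangle$ and hence the whole argument.
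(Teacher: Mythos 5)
Your argument is correct and is essentially the same as the one the paper relies on: the corollary is quoted from \cite{at} without a written proof, but the paragraph preceding it sketches exactly your route — evaluate $I_i(G,X_G)$ at $X_G=\mathbf{0}$, invoke Proposition~\ref{prop:eval1} to identify the result with $\langle\prod_{j\le i}f_j\rangle$, and conclude $f_1=\cdots=f_i=1$. Your extra care about the rank hypothesis in Proposition~\ref{prop:eval1} and about the evaluated minors generating the evaluated ideal is sound and fills in details the paper leaves implicit.
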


Therefore, finding a characterization of $\Lambda_k$ can help in the characterization of the graphs with $\phi_\mathbb{Z}(G)\leq k$.

\subsection{Distance ideals and induced subgraphs}

In general, the distance ideals are not monotone under taking induced subgraphs.
However, we have the following results.

\begin{lemma}\cite{at}\label{lemma:inducemonotone}
Let $H$ be an induced subgraph of $G$ such that for every pair of vertices $v_i,v_j$ in $V(H)$, there is a shortest path from $v_i$ to $v_j$ in $G$ which lies entirely in $H$.
Then, $I^\mathcal{R}_i(H,X_H)\subseteq I^\mathcal{R}_i(G,X_G)$  for all $i\leq |V(H)|$, and $\Phi_\mathcal{R}(H)\leq \Phi_\mathcal{R}(G)$.
\end{lemma}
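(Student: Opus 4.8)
The plan is to reduce the statement to the observation that the hypothesis on $H$ is exactly the condition that $H$ be an \emph{isometric} subgraph of $G$, i.e.\ $d_H(v_i,v_j)=d_G(v_i,v_j)$ for every pair $v_i,v_j\in V(H)$. Granting that, the generalized distance matrix $D(H,X_H)$ is literally the principal submatrix of $D(G,X_G)$ on the rows and columns indexed by $V(H)$, once we make the harmless identification of $X_H$ with the corresponding subset of the variables $X_G$; this identification also furnishes the ring inclusion $\mathcal{R}[X_H]\subseteq\mathcal{R}[X_G]$, which is what makes the asserted ideal containment meaningful in the first place.

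First I would prove the isometry claim. Because $H$ is a subgraph of $G$, every path in $H$ is a path in $G$, so $d_G(v_i,v_j)\le d_H(v_i,v_j)$. Conversely, the hypothesis supplies a shortest path of $G$ between $v_i$ and $v_j$ that lies entirely in $H$, hence is a path of $H$ of length $d_G(v_i,v_j)$, giving $d_H(v_i,v_j)\le d_G(v_i,v_j)$. Equality follows, so $D(H,X_H)$ is obtained from $D(G,X_G)$ just by deleting the rows and columns outside $V(H)$.

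Next, the ideal containment: every $i\times i$ submatrix of $D(H,X_H)$ is also an $i\times i$ submatrix of $D(G,X_G)$, so $\minors_i(D(H,X_H))\subseteq\minors_i(D(G,X_G))$ as subsets of $\mathcal{R}[X_G]$, and passing to the generated ideals preserves this inclusion, yielding $I^\mathcal{R}_i(H,X_H)\subseteq I^\mathcal{R}_i(G,X_G)$ for all $i\le|V(H)|$. Finally, for the inequality on $\Phi$, put $i=\Phi_\mathcal{R}(H)$; then $i\le|V(H)|$ and $I^\mathcal{R}_i(H,X_H)=\langle1\rangle$, so $1\in I^\mathcal{R}_i(G,X_G)$ by the containment, i.e.\ $I^\mathcal{R}_i(G,X_G)=\langle1\rangle$, which forces $\Phi_\mathcal{R}(G)\ge i=\Phi_\mathcal{R}(H)$.

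No step here is genuinely difficult; the only things to be careful about are (a) reading ``lies entirely in $H$'' so that the path, not merely its vertex set, is a subgraph of $H$ --- automatic since $H$ is induced --- and (b) being explicit that the variable sets $X_H$ and $X_G$ are nested, so that the two distance ideals live in the common ring $\mathcal{R}[X_G]$ and ``$\subseteq$'' is an honest inclusion of ideals there. Everything else is the elementary principle that enlarging a matrix cannot shrink its ideal of $i\times i$ minors.
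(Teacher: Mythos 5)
Your proof is correct. The paper itself gives no argument for this lemma (it is quoted from the preprint \cite{at}), but your reduction --- the hypothesis makes $H$ isometric in $G$, so $D(H,X_H)$ is literally the principal submatrix $D(G,X_G)[V(H)]$, whence $\minors_i(D(H,X_H))\subseteq\minors_i(D(G,X_G))$ and the ideal containment and the inequality $\Phi_\mathcal{R}(H)\leq\Phi_\mathcal{R}(G)$ follow --- is exactly the intended argument, and you correctly flag the two points (induced-ness making the path a path of $H$, and the nesting $\mathcal{R}[X_H]\subseteq\mathcal{R}[X_G]$ making the containment meaningful) that need saying.
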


In particular we have the following.

\begin{lemma}\cite{at}\label{lemma:distance2inducedmonotone}
Let $H$ be an induced subgraph of $G$ with diameter is 2, that is the distance between any pair of vertices in $H$ is at most 2.
Then $I^\mathcal{R}_i(H,X_H)\subseteq I^\mathcal{R}_i(G,X_G)$  for all $i\leq |V(H)|$.
\end{lemma}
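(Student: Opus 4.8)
The plan is to derive this as an immediate consequence of Lemma~\ref{lemma:inducemonotone}. That lemma already yields the desired containment $I^\mathcal{R}_i(H,X_H)\subseteq I^\mathcal{R}_i(G,X_G)$ as soon as one knows that for every pair $v_i,v_j\in V(H)$ there is a shortest $v_i$--$v_j$ path in $G$ lying entirely within $H$. So the whole task reduces to checking that an \emph{induced} subgraph $H$ of diameter at most $2$ automatically satisfies this hypothesis.

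First I would dispose of the trivial case: if $v_i$ and $v_j$ are adjacent in $H$, then they are adjacent in $G$, so $d_G(v_i,v_j)=1=d_H(v_i,v_j)$ and the single edge is a shortest path in $G$ contained in $H$. Next, suppose $v_i\neq v_j$ are non-adjacent in $H$; since $d_H(v_i,v_j)\le 2$, there is a vertex $w\in V(H)$ adjacent in $H$ to both $v_i$ and $v_j$, hence adjacent in $G$ to both, so $d_G(v_i,v_j)\le 2$. On the other hand, because $H$ is an \emph{induced} subgraph of $G$, the non-adjacency of $v_i$ and $v_j$ in $H$ forces their non-adjacency in $G$, so $d_G(v_i,v_j)\ge 2$. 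Combining, $d_G(v_i,v_j)=2$ and the path $v_i\,w\,v_j$ is a shortest $v_i$--$v_j$ path in $G$ lying entirely in $H$. This verifies the hypothesis of Lemma~\ref{lemma:inducemonotone} for every pair of vertices of $H$, and the stated containment follows.

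The only point that genuinely needs care — the "hard part", modest as it is — is the lower bound $d_G(v_i,v_j)\ge 2$ for the distance-$2$ pairs, since this is precisely where being an induced subgraph (rather than an arbitrary subgraph) is essential: otherwise $G$ could contain an edge $v_iv_j$ absent from $H$, and then no shortest $v_i$--$v_j$ path of $G$ would lie in $H$. Everything else is immediate, so I would keep the argument to these few lines rather than reproving Lemma~\ref{lemma:inducemonotone}.
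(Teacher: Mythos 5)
Your argument is correct and is exactly the route the paper intends: it presents this lemma as a particular case of Lemma~\ref{lemma:inducemonotone}, and your verification that an induced subgraph of diameter at most $2$ satisfies that lemma's shortest-path hypothesis (including the key use of inducedness to get $d_G(v_i,v_j)\ge 2$ for non-adjacent pairs) is precisely the missing check.
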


\begin{figure}[h!]
\begin{center}
\begin{tabular}{c@{\extracolsep{10mm}}c@{\extracolsep{10mm}}c@{\extracolsep{10mm}}c@{\extracolsep{10mm}}c}
	\begin{tikzpicture}[scale=.5]
	\tikzstyle{every node}=[minimum width=0pt, inner sep=2pt, circle]
	\draw (126+36:1) node (v1) [draw] {};
	\draw (198+36:1) node (v2) [draw] {};
	\draw (270+36:1) node (v3) [draw] {};
	\draw (342+36:1) node (v4) [draw] {};
	\draw (v1) -- (v2);
	\draw (v2) -- (v3);
	\draw (v4) -- (v3);
	\end{tikzpicture}
&
	\begin{tikzpicture}[scale=.5]
	\tikzstyle{every node}=[minimum width=0pt, inner sep=2pt, circle]
	\draw (-.5,-.9) node (v1) [draw] {};
	\draw (.5,-.9) node (v2) [draw] {};
	\draw (0,0) node (v3) [draw] {};
	\draw (0,.9) node (v4) [draw] {};
	\draw (v1) -- (v2);
	\draw (v1) -- (v3);
	\draw (v2) -- (v3);
	\draw (v3) -- (v4);
	\end{tikzpicture}
&
	\begin{tikzpicture}[scale=.5]
	\tikzstyle{every node}=[minimum width=0pt, inner sep=2pt, circle]
	\draw (-.5,0) node (v2) [draw] {};
	\draw (0,-.9) node (v1) [draw] {};
	\draw (.5,0) node (v3) [draw] {};
	\draw (0,.9) node (v4) [draw] {};
	\draw (v1) -- (v2);
	\draw (v1) -- (v3);
	\draw (v2) -- (v3);
	\draw (v2) -- (v4);
	\draw (v3) -- (v4);
	\end{tikzpicture}
&
    \begin{tikzpicture}[scale=.5]
	\tikzstyle{every node}=[minimum width=0pt, inner sep=2pt, circle]
	\draw (0:1) node (v1) [draw] {};
	\draw (90:1) node (v2) [draw] {};
	\draw (180:1) node (v3) [draw] {};
	\draw (270:1) node (v4) [draw] {};
	\draw (v1) -- (v2) -- (v3) -- (v4) -- (v1);
	\end{tikzpicture}
\\
$P_4$
&
$\sf{paw}$
&
$\sf{diamond}$
&
$C_4$
\end{tabular}
\end{center}
\caption{The graphs $P_4$, $\sf{paw}$, $\sf{diamond}$ and $C_4$.}
\label{fig:forbiddendistance1}
\end{figure}

In fact, in \cite{at} the family $\Lambda_1$ of graphs having only 1 trivial distance ideal was characterized in terms of induced forbidden subgraphs: $\{P_4,\sf{paw},\sf{diamond}\}$-free graphs; that are the graphs isomorphic to an induced subgraph of $K_{m,n}$ or $K_{n}$.
Also in \cite{at}, the family of graphs having only 1 trivial distance ideal over $\mathbb{R}$ was characterized as: $\{P_4,{\sf{paw}},{\sf{diamond}}, C_4\}$-free graphs, that are the graphs isomorphic to an induced subgraph of $K_{1,n}$ or $K_{n}$.

These families appear in other contexts.
A graph is {\it trivially perfect} if for every induced subgraph the stability number equals the number of maximal cliques.
In \cite[Theorem 2]{G}, Golumbic characterized trivially prefect graphs as $\{P_4,C_4\}$-free graphs. There are other equivalent characterization of this family, see \cite{CCY,Rubio}.
Therefore, graphs with 1 trivial distance ideal over $\mathbb{R}$ are a subclass of trivial perfect graphs.

\begin{figure}[h]
\begin{center}
\begin{tabular}{c@{\extracolsep{.5cm}}c@{\extracolsep{.5cm}}c@{\extracolsep{.5cm}}c@{\extracolsep{.5cm}}c}
\begin{tikzpicture}[scale=.5]
	\tikzstyle{every node}=[minimum width=0pt, inner sep=2pt, circle]
    \draw (54+36:1) node (v0) [draw] {};
	\draw (126+36:1) node (v1) [draw] {};
	\draw (198+36:1) node (v3) [draw] {};
	\draw (270+36:1) node (v2) [draw] {};
	\draw (342+36:1) node (v4) [draw] {};
    \Edge[](v0)(v4);
    \Edge[](v1)(v3);
    \Edge[](v2)(v3);
    \Edge[](v2)(v4);
    \Edge[](v3)(v4);
\end{tikzpicture}
&
\begin{tikzpicture}[scale=.5]
	\tikzstyle{every node}=[minimum width=0pt, inner sep=2pt, circle]
    \draw (54+36:1) node (v0) [draw] {};
	\draw (126+36:1) node (v1) [draw] {};
	\draw (198+36:1) node (v3) [draw] {};
	\draw (270+36:1) node (v2) [draw] {};
	\draw (342+36:1) node (v4) [draw] {};
    \Edge[](v0)(v4);
    \Edge[](v1)(v3);
    \Edge[](v1)(v4);
    \Edge[](v2)(v3);
    \Edge[](v2)(v4);
    \Edge[](v3)(v4);
\end{tikzpicture}
&
\begin{tikzpicture}[scale=.5]
	\tikzstyle{every node}=[minimum width=0pt, inner sep=2pt, circle]
    \draw (54+36:1) node (v0) [draw] {};
	\draw (126+36:1) node (v1) [draw] {};
	\draw (198+36:1) node (v3) [draw] {};
	\draw (270+36:1) node (v2) [draw] {};
	\draw (342+36:1) node (v4) [draw] {};
    \Edge[](v0)(v1);
    \Edge[](v0)(v4);
    \Edge[](v1)(v3);
    \Edge[](v2)(v3);
    \Edge[](v2)(v4);
    \Edge[](v3)(v4);
\end{tikzpicture}
&
\begin{tikzpicture}[scale=.5]
	\tikzstyle{every node}=[minimum width=0pt, inner sep=2pt, circle]
    \draw (54+36:1) node (v4) [draw] {};
	\draw (126+36:1) node (v1) [draw] {};
	\draw (198+36:1) node (v2) [draw] {};
	\draw (270+36:1) node (v3) [draw] {};
	\draw (342+36:1) node (v0) [draw] {};
    \Edge[](v0)(v3);
    \Edge[](v0)(v4);
    \Edge[](v1)(v2);
    \Edge[](v1)(v4);
    \Edge[](v2)(v3);
    \Edge[](v2)(v4);
    \Edge[](v3)(v4);
\end{tikzpicture}
&
\begin{tikzpicture}[scale=.5]
	\tikzstyle{every node}=[minimum width=0pt, inner sep=2pt, circle]
    \draw (54+36:1) node (v4) [draw] {};
	\draw (126+36:1) node (v1) [draw] {};
	\draw (198+36:1) node (v2) [draw] {};
	\draw (270+36:1) node (v3) [draw] {};
	\draw (342+36:1) node (v0) [draw] {};
    \Edge[](v0)(v3);
    \Edge[](v0)(v4);
    \Edge[](v1)(v2);
    \Edge[](v1)(v3);
    \Edge[](v1)(v4);
    \Edge[](v2)(v3);
    \Edge[](v2)(v4);
    \Edge[](v3)(v4);
\end{tikzpicture}\\
\Ga & \Gb & \Gc & \Gd & \Ge \\

\begin{tikzpicture}[scale=.5]
	\tikzstyle{every node}=[minimum width=0pt, inner sep=2pt, circle]
    \draw (60:1) node (v0) [draw] {};
	\draw (120:1) node (v1) [draw] {};
	\draw (180:1) node (v2) [draw] {};
	\draw (240:1) node (v3) [draw] {};
	\draw (300:1) node (v5) [draw] {};
    \draw (0:1) node (v4) [draw] {};
    \Edge[](v0)(v4);
    \Edge[](v1)(v5);
    \Edge[](v2)(v3);
    \Edge[](v2)(v5);
    \Edge[](v3)(v5);
    \Edge[](v4)(v5);
\end{tikzpicture}
&
\begin{tikzpicture}[scale=.5]
	\tikzstyle{every node}=[minimum width=0pt, inner sep=2pt, circle]
    \draw (60:1) node (v4) [draw] {};
	\draw (120:1) node (v1) [draw] {};
	\draw (180:1) node (v2) [draw] {};
	\draw (240:1) node (v3) [draw] {};
	\draw (300:1) node (v0) [draw] {};
    \draw (0:1) node (v5) [draw] {};
    \Edge[](v0)(v5);
    \Edge[](v1)(v2);
    \Edge[](v1)(v4);
    \Edge[](v2)(v3);
    \Edge[](v3)(v5);
    \Edge[](v4)(v5);
\end{tikzpicture}
&
\begin{tikzpicture}[scale=.5]
	\tikzstyle{every node}=[minimum width=0pt, inner sep=2pt, circle]
    \draw (60:1) node (v5) [draw] {};
	\draw (120:1) node (v1) [draw] {};
	\draw (180:1) node (v2) [draw] {};
	\draw (240:1) node (v3) [draw] {};
	\draw (300:1) node (v4) [draw] {};
    \draw (0:1) node (v0) [draw] {};
    \Edge[](v0)(v4);
    \Edge[](v1)(v2);
    \Edge[](v1)(v5);
    \Edge[](v2)(v5);
    \Edge[](v3)(v4);
    \Edge[](v3)(v5);
\end{tikzpicture}
&
\begin{tikzpicture}[scale=.5]
	\tikzstyle{every node}=[minimum width=0pt, inner sep=2pt, circle]
    \draw (60:1) node (v0) [draw] {};
	\draw (120:1) node (v5) [draw] {};
	\draw (180:1) node (v2) [draw] {};
	\draw (240:1) node (v3) [draw] {};
	\draw (300:1) node (v4) [draw] {};
    \draw (0:1) node (v1) [draw] {};
    \Edge[](v0)(v5);
    \Edge[](v1)(v4);
    \Edge[](v1)(v5);
    \Edge[](v2)(v3);
    \Edge[](v2)(v5);
    \Edge[](v3)(v5);
    \Edge[](v4)(v5);
\end{tikzpicture}
&
\begin{tikzpicture}[scale=.5]
	\tikzstyle{every node}=[minimum width=0pt, inner sep=2pt, circle]
    \draw (60:1) node (v0) [draw] {};
	\draw (120:1) node (v4) [draw] {};
	\draw (180:1) node (v2) [draw] {};
	\draw (240:1) node (v3) [draw] {};
	\draw (300:1) node (v1) [draw] {};
    \draw (0:1) node (v5) [draw] {};
    \Edge[](v0)(v5);
    \Edge[](v1)(v5);
    \Edge[](v2)(v3);
    \Edge[](v2)(v4);
    \Edge[](v3)(v4);
    \Edge[](v3)(v5);
    \Edge[](v4)(v5);
\end{tikzpicture}\\
$\Gf$ & \Gg & $\Gh$ & $\Gi$ & $\Gj$ \\

\begin{tikzpicture}[scale=.5]
	\tikzstyle{every node}=[minimum width=0pt, inner sep=2pt, circle]
    \draw (60:1) node (v0) [draw] {};
	\draw (120:1) node (v1) [draw] {};
	\draw (180:1) node (v4) [draw] {};
	\draw (240:1) node (v3) [draw] {};
	\draw (300:1) node (v2) [draw] {};
    \draw (0:1) node (v5) [draw] {};
    \Edge[](v0)(v1);
    \Edge[](v0)(v5);
    \Edge[](v1)(v4);
    \Edge[](v2)(v4);
    \Edge[](v2)(v5);
    \Edge[](v3)(v4);
    \Edge[](v3)(v5);
\end{tikzpicture}
&
\begin{tikzpicture}[scale=.5]
	\tikzstyle{every node}=[minimum width=0pt, inner sep=2pt, circle]
    \draw (60:1) node (v0) [draw] {};
	\draw (120:1) node (v1) [draw] {};
	\draw (180:1) node (v4) [draw] {};
	\draw (240:1) node (v3) [draw] {};
	\draw (300:1) node (v2) [draw] {};
    \draw (0:1) node (v5) [draw] {};
    \Edge[](v0)(v5);
    \Edge[](v1)(v4);
    \Edge[](v2)(v3);
    \Edge[](v2)(v4);
    \Edge[](v2)(v5);
    \Edge[](v3)(v4);
    \Edge[](v3)(v5);
\end{tikzpicture}
&
\begin{tikzpicture}[scale=.5]
	\tikzstyle{every node}=[minimum width=0pt, inner sep=2pt, circle]
    \draw (60:1) node (v0) [draw] {};
	\draw (120:1) node (v1) [draw] {};
	\draw (180:1) node (v4) [draw] {};
	\draw (240:1) node (v3) [draw] {};
	\draw (300:1) node (v2) [draw] {};
    \draw (0:1) node (v5) [draw] {};
    \Edge[](v0)(v5);
    \Edge[](v1)(v4);
    \Edge[](v1)(v5);
    \Edge[](v2)(v3);
    \Edge[](v2)(v4);
    \Edge[](v2)(v5);
    \Edge[](v3)(v4);
    \Edge[](v3)(v5);
\end{tikzpicture}
&
\begin{tikzpicture}[scale=.5]
	\tikzstyle{every node}=[minimum width=0pt, inner sep=2pt, circle]
    \draw (60:1) node (v0) [draw] {};
	\draw (120:1) node (v1) [draw] {};
	\draw (180:1) node (v4) [draw] {};
	\draw (240:1) node (v3) [draw] {};
	\draw (300:1) node (v2) [draw] {};
    \draw (0:1) node (v5) [draw] {};
    \Edge[](v0)(v1);
    \Edge[](v0)(v5);
    \Edge[](v1)(v4);
    \Edge[](v2)(v3);
    \Edge[](v2)(v4);
    \Edge[](v2)(v5);
    \Edge[](v3)(v4);
    \Edge[](v3)(v5);
\end{tikzpicture}
&
\begin{tikzpicture}[scale=.5]
	\tikzstyle{every node}=[minimum width=0pt, inner sep=2pt, circle]
    \draw (60:1) node (v2) [draw] {};
	\draw (120:1) node (v1) [draw] {};
	\draw (180:1) node (v4) [draw] {};
	\draw (240:1) node (v3) [draw] {};
	\draw (300:1) node (v0) [draw] {};
    \draw (0:1) node (v5) [draw] {};
    \Edge[](v0)(v2);
    \Edge[](v0)(v3);
    \Edge[](v0)(v4);
    \Edge[](v0)(v5);
    \Edge[](v1)(v2);
    \Edge[](v1)(v3);
    \Edge[](v1)(v4);
    \Edge[](v1)(v5);
    \Edge[](v2)(v4);
    \Edge[](v2)(v5);
    \Edge[](v3)(v4);
    \Edge[](v3)(v5);
    \Edge[](v4)(v5);
\end{tikzpicture}\\
$\Gk$ & \Gl & $\Gm$ & $\Gn$ & $\Go$ \\
\end{tabular}
\end{center}
\begin{center}
\begin{tabular}{c@{\extracolsep{.5cm}}c}

\begin{tikzpicture}[scale=.5]
	\tikzstyle{every node}=[minimum width=0pt, inner sep=2pt, circle]
    \draw (0:1) node (v0) [draw] {};
	\draw (360/7:1) node (v1) [draw] {};
	\draw (2*360/7:1) node (v4) [draw] {};
	\draw (3*360/7:1) node (v3) [draw] {};
	\draw (4*360/7:1) node (v2) [draw] {};
    \draw (5*360/7:1) node (v5) [draw] {};
    \draw (6*360/7:1) node (v6) [draw] {};
    \Edge[](v0)(v1);
    \Edge[](v0)(v6);
    \Edge[](v1)(v6);
    \Edge[](v2)(v4);
    \Edge[](v2)(v5);
    \Edge[](v3)(v4);
    \Edge[](v3)(v5);
    \Edge[](v4)(v6);
    \Edge[](v5)(v6);
\end{tikzpicture}\\
$\Gq$\\
\end{tabular}
\end{center}
\label{fig:for2}
\caption{Some minimal forbidden graphs for graphs with 2 trivial distance ideals over $\mathbb{Z}$.}
\end{figure}

The aim of this paper is to explore the properties of the family $\Lambda_2$ of graphs with at most two trivial distance ideals over $\mathbb{Z}$.
In particular, we are going to find an infinite number of graphs that are forbidden for $\Lambda_2$.
Let $F$ be the set of 17 graphs shown in Figure~\ref{fig:for2}.
In Section~\ref{section:characterization}, we will prove that graphs in $\Lambda_2$ are $\{F,\text{\sf odd-holes}\}$-free graphs, where {\sf odd-holes} are cycles of odd length greater or equal than 7.

One of the main applications in finding a characterization of $\Lambda_2$ is that it is an approach to obtain a characterization of the graphs with $\phi_\mathbb{Z}(G)=2$ since they are contained in $\Lambda_2$.
In \cite[Theorem 3]{HW}, it was proved that the distance matrix of trees has exactly 2 invariant factors equal to 1.
Therefore,
\[
\text{trees} \subseteq \Lambda_2 \subseteq \{F,\text{\sf odd-holes}\} \text{-free graphs}.
\]

Among the forbidden graphs for $\Lambda_2$ there are several graphs studied in other contexts, like $\Ga$ and {\sf odd-holes} studied in \cite{CI,CII} and \cite{CS}, respectively.
Other related family is the 3-leaf powers that was characterized in \cite{DGHN} as $\{\Ga,\Gb,\Gd\}$-free chordal graphs.

Distance-hereditary graphs is another related family, defined by Howorka in \cite{H77}.
A graph $G$ is {\it distance-hereditary} if for each connected induced subgraph $H$ of $G$ and every pair $u$ and $v$ of vertices in $H$, $d_H(u,v)=d_G(u,v)$.
Distance-hereditary graphs are $\{\Gc,\Gd,{\sf domino}, {\sf holes}\}$-free graphs, where {\sf holes} are cycles of length greater or equal than 5, which intersects with $\Lambda_2$.
Also, if $H$ is a connected induced subgraph of a distance-hereditary graph $G$, then $I^\mathcal{R}_i(H,X_H)\subseteq I^\mathcal{R}_i(G,X_G)$  for all $i\leq |V(H)|$.

Previously, an analogous notion to the distance ideals but for the adjacency and Laplacian matrices was explored.
These were called {\it critical ideals}, see \cite{CV}.
They have been explored in \cite{alfacorrval,AV,AV1,AVV}, and in \cite{alfaro,alfalin} it was found new connections in contexts different from the Smith group or Sandpile group like the zero-forcing number and the minimum rank of a graph.
In this setting, the set of forbidden graphs for the family with at most $k$ trivial critical ideals is conjectured to be finite, see \cite[Conjecture 5.5]{AV}.
It is interesting that for distance ideals this is not true.

\section{Graphs with 2 trivial distance ideals}\label{section:characterization}

A graph $G$ is {\it forbidden} for $\Lambda_k$ if the $(k+1)$-th distance ideal of $G$ is trivial.
In addition, a forbidden graph $H$ for $\Lambda_k$ is {\it minimal} if $H$ does not contain a connected forbidden graph for $\Lambda_k$ as induced subgraph, and any graph $G$ containing $H$ as induced subgraph, have that $G$ id forbidden for $\Lambda_k$.
The set of minimal forbidden graphs for $\Lambda_k$ will be denoted by ${\sf Forb}_k$.

\begin{lemma}\label{lemma:diameter2trivialdistance}
    Let $G\in F$.
    Then, $\Phi(G)=3$ and no induced subgraph $H$ of $G$ has $\Phi(H)=3$.
\end{lemma}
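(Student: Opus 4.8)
\emph{Overview.} The statement is a finite verification over the seventeen graphs of $F$ together with their connected induced subgraphs, so my plan is to reduce it to two mechanical tasks per graph and explain how each is carried out. \emph{Task 1 ($\Phi(G)=3$ for $G\in F$).} Write $D(G,X_G)=\diag(X_G)+D(G)$. For the lower bound $\Phi(G)\ge3$, i.e.\ $I_3(G,X_G)=\langle1\rangle$, I would exhibit a short list $m_1,\dots,m_t$ of $3\times3$ minors of $D(G,X_G)$ together with polynomials $h_1,\dots,h_t\in\mathbb Z[X_G]$ with $\sum_j h_jm_j=1$: one picks the defining triples of rows and columns so that some minor has constant term $\pm1$ and the few indeterminates it involves are eliminated against the other minors, and then checks the identity (a Gr\"obner computation over $\mathbb Z$ is convenient here, but the resulting certificate is finite and verifiable by hand). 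For the upper bound $\Phi(G)\le3$ it is enough to show $I_4(G,X_G)\neq\langle1\rangle$, and because the determinantal ideals are nested, $I_1\supseteq I_2\supseteq\cdots$, this also forces $I_i(G,X_G)\neq\langle1\rangle$ for all $i\ge4$. By Proposition~\ref{prop:eval1} one has $I_4(G,\mathbf 0)=\langle\Delta_4(D(G))\rangle$, so if the Smith normal form of the integer distance matrix $D(G)$ has $\Delta_4(D(G))\neq\pm1$ (equivalently $\phi(G)=3$) then $I_4(G,\mathbf 0)$, hence $I_4(G,X_G)$, is a proper ideal; in the remaining cases, if any, with $\phi(G)\ge4$ one instead produces an assignment $\bar{\mathbf d}$ of $X_G$ over $\mathbb Q$ or some $\overline{\mathbb F_p}$ with $\operatorname{rank}D(G,\bar{\mathbf d})\le3$, which again puts $I_4(G,X_G)$ inside a proper ideal. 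Combining the two bounds gives $\Phi(G)=3$.

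\emph{Task 2 ($\Phi(H)\neq3$ for every proper connected induced subgraph $H$ of $G$).} Split according to whether $H\in\Lambda_1$. If $H$ is $\{P_4,{\sf paw},{\sf diamond}\}$-free then $\Phi(H)\le1$ by the characterization of $\Lambda_1$ in \cite{at}. Otherwise $H$ contains an induced $P_4$, ${\sf paw}$ or ${\sf diamond}$, so $\Phi(H)\ge2$, and it suffices to prove $\Phi(H)\le2$, i.e.\ $I_3(H,X_H)\neq\langle1\rangle$. For this I would name an explicit assignment of the diagonal indeterminates, over $\mathbb Q$ or a small $\overline{\mathbb F_p}$, making $\operatorname{rank}D(H,X_H)\le2$; then every $3\times3$ minor vanishes there, so $I_3(H,X_H)$ lies in a proper ideal. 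Such assignments are easy to find because the $3\times3$ minors factor conveniently: for ${\sf paw}$, with its degree-$3$ vertex $b$ and leaf $e$, the choice $x_b=\tfrac12$, $x_e=2$, $(x_a-2)(x_c-2)=1$ already annihilates all sixteen $3\times3$ minors, and the other small graphs admit similar one-parameter families; moreover most of the relevant $H$'s ($P_4$, ${\sf paw}$, ${\sf diamond}$, $C_4$, $C_5$, and so on) recur across the members of $F$ and need be handled only once. Finally one checks, using coarse invariants (triangle-freeness, clique number, number of vertices), that no member of $F$ is a proper induced subgraph of another, so the case $\Phi(H)=3$ cannot arise from a member of $F$ itself.

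\emph{Expected difficulty.} There is no conceptual obstacle; the difficulty is the size of the case analysis — seventeen ambient graphs, many connected induced subgraphs each, and a determinant (and, for Task 1, a Smith normal form) computation at every node — which is why it is naturally carried out with a computer algebra system. The one step that calls for a little ingenuity is the construction in Task 2: for each small graph outside $\Lambda_1$ one must write down an explicit rank-$2$ specialization of its generalized distance matrix (and a rank-$3$ one for any member of $F$ with $\phi\ge4$), and keeping these organized as one-parameter families, as in the ${\sf paw}$ example, is what makes the argument tractable.
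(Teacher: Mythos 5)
Your proposal is correct and matches the paper's approach: the paper's entire proof of this lemma is a one-line appeal to a finite Macaulay2 verification (using the code from the appendix of the distance-ideals paper), which is exactly the computation you describe. Your write-up merely makes explicit what that verification certifies — triviality of $I_3(G,X_G)$ via a B\'ezout-type combination of $3\times3$ minors, non-triviality of $I_4(G,X_G)$ and of $I_3(H,X_H)$ for proper connected induced subgraphs via specializations or evaluation at $\mathbf{0}$ — so there is nothing to reconcile.
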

\begin{proof}
    This could be checked using Macaulay2 code \cite{macaulay2} given in \cite[Appendix]{at}.
\end{proof}

\begin{proposition}\label{proposition:forbiddensimplecases}
    The graphs \Gb, \Gc, \Gd, \Ge, $\Gi$, $\Gk$, $\Gn$, $\Go$  are in ${\sf Forb}_2$.
\end{proposition}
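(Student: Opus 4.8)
The plan is to combine Lemma~\ref{lemma:diameter2trivialdistance} with the induced-subgraph monotonicity results, Lemmas~\ref{lemma:inducemonotone} and~\ref{lemma:distance2inducedmonotone}, to verify both defining conditions for membership in ${\sf Forb}_2$. Recall that a graph $H$ lies in ${\sf Forb}_2$ precisely when (i) $H$ is forbidden for $\Lambda_2$, i.e.\ $I_3(H,X_H)$ is trivial, equivalently $\Phi(H)\geq 3$; (ii) $H$ contains no connected forbidden graph for $\Lambda_2$ as a proper induced subgraph; and (iii) every graph $G$ containing $H$ as an induced subgraph is itself forbidden for $\Lambda_2$. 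By Lemma~\ref{lemma:diameter2trivialdistance}, each $G\in F$ already satisfies $\Phi(G)=3$, which gives (i), and no induced subgraph $H$ of $G$ has $\Phi(H)=3$, which gives (ii): if some proper induced subgraph were a connected forbidden graph for $\Lambda_2$ it would have $\Phi\geq 3$, and since $\Phi$ of an induced subgraph cannot exceed $\Phi$ of the ambient graph it would equal $3$, contradicting the lemma. So the entire content of the proof is establishing the upward-closure condition (iii) for the eight specific graphs listed.

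For condition (iii), the key observation is that all eight graphs \Gb, \Gc, \Gd, \Ge, $\Gi$, $\Gk$, $\Gn$, $\Go$ have diameter $2$. One checks this directly from Figure~\ref{fig:for2}: \Gb\ (the dart), \Gc\ (the house), \Gd\ (the gem), \Ge\ (the full-house) are on five vertices and one verifies every non-adjacent pair has a common neighbor; similarly $\Gi$, $\Gk$, $\Gn$, $\Go$ on six vertices have diameter $2$ (in particular $\Go$ is $K_6$ minus a perfect matching, which visibly has diameter $2$). Once diameter $2$ is known, Lemma~\ref{lemma:distance2inducedmonotone} applies: if $H$ is one of these graphs and $H$ is an induced subgraph of $G$, then $I_i(H,X_H)\subseteq I_i(G,X_G)$ for all $i\leq |V(H)|$. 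Taking $i=3$ and using that $I_3(H,X_H)=\langle 1\rangle$ by part (i), we get $\langle 1\rangle\subseteq I_3(G,X_G)$, hence $I_3(G,X_G)=\langle 1\rangle$, so $G$ is forbidden for $\Lambda_2$. This is exactly condition (iii).

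So the proof would proceed as follows. First, invoke Lemma~\ref{lemma:diameter2trivialdistance} to record that for each of the eight graphs $H$ in the statement, $\Phi(H)=3$ and no induced subgraph of $H$ has $\Phi=3$; the former yields that $I_3(H,X_H)$ is trivial (so $H$ is forbidden for $\Lambda_2$), and the latter yields minimality in the sense of no proper connected forbidden induced subgraph. Second, verify that each of the eight graphs has diameter $2$ — a short case check against Figure~\ref{fig:for2}, which I would state explicitly but not belabor. Third, apply Lemma~\ref{lemma:distance2inducedmonotone} to conclude that any $G$ containing such an $H$ as an induced subgraph has $I_3(G,X_G)\supseteq I_3(H,X_H)=\langle 1\rangle$, hence is forbidden for $\Lambda_2$. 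Together these three points are precisely the definition of ${\sf Forb}_2$ membership.

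The main obstacle, such as it is, is really just the diameter-$2$ verification: this is the only place the argument is specific to these eight graphs rather than to all seventeen graphs of $F$ — indeed the graphs of $F$ not on this list (such as \Ga\ the bull, or $\Gq=C_7$) have diameter greater than $2$, so Lemma~\ref{lemma:distance2inducedmonotone} does not apply and a different, presumably more delicate, argument using Lemma~\ref{lemma:inducemonotone} with its hypothesis on shortest paths staying inside $H$ will be needed for them elsewhere in the paper. For the eight graphs at hand, though, there is no real obstacle: the diameter check is routine and the rest is a direct citation of the two monotonicity lemmas together with Lemma~\ref{lemma:diameter2trivialdistance}.
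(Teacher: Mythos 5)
Your proposal is correct and follows exactly the paper's own (one-line) argument: the paper proves this proposition by citing Lemmas~\ref{lemma:distance2inducedmonotone} and~\ref{lemma:diameter2trivialdistance}, which is precisely the combination you describe, with your diameter-$2$ check being the implicit hypothesis needed to invoke the former lemma. No differences worth noting.
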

\begin{proof}
    %Since neither of these graphs contain a graph in ${\sf Forb}_2$ as induced subgraph, then
    The result follows by Lemmas~\ref{lemma:distance2inducedmonotone} and \ref{lemma:diameter2trivialdistance}.
\end{proof}

\begin{lemma}\label{lemma:bullisforbidden}
    $\Ga$ graph is in ${\sf Forb}_2$.
\end{lemma}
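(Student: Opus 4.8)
The plan is to show that the {\sf bull} is minimal forbidden for $\Lambda_2$ in two parts. First, I must establish that {\sf bull} itself is forbidden, i.e. that $\Phi({\sf bull}) = 3$; this is already recorded in Lemma~\ref{lemma:diameter2trivialdistance} since ${\sf bull} = \Ga \in F$. The substance of the lemma is therefore the second part: that {\sf bull} is \emph{minimal}, which by the definition in the excerpt has two clauses to verify. The first clause — that {\sf bull} contains no connected forbidden graph for $\Lambda_2$ as a proper induced subgraph — is the easy direction and follows immediately from Lemma~\ref{lemma:diameter2trivialdistance}, which asserts that no induced subgraph $H$ of $\Ga$ has $\Phi(H) = 3$. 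So the real work is the second clause: every connected graph $G$ containing {\sf bull} as an induced subgraph is forbidden for $\Lambda_2$, i.e. $I_3(G, X_G)$ is trivial.

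The obstacle is that {\sf bull} has diameter $2$, so Lemma~\ref{lemma:distance2inducedmonotone} does \emph{not} apply directly — wait, actually {\sf bull} does have diameter $2$: the two "horns" are at distance $2$ via the central triangle, and every other pair is at distance $\le 2$. So in fact Lemma~\ref{lemma:distance2inducedmonotone} \emph{does} apply: for any $G$ containing {\sf bull} as an induced subgraph, $I_3({\sf bull}, X_{\sf bull}) \subseteq I_3(G, X_G)$. Combined with $\Phi({\sf bull}) = 3$ (which says $I_3({\sf bull}, X_{\sf bull}) = \langle 1 \rangle$), this gives $I_3(G, X_G) = \langle 1 \rangle$, so $G$ is forbidden for $\Lambda_2$. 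This is exactly the argument of Proposition~\ref{proposition:forbiddensimplecases}. So the lemma should follow the same way, and I would simply write: combine Lemma~\ref{lemma:distance2inducedmonotone} with Lemma~\ref{lemma:diameter2trivialdistance}.

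But then why is {\sf bull} separated out into its own lemma rather than folded into Proposition~\ref{proposition:forbiddensimplecases}? The likely reason is a subtlety in the minimality definition: to conclude $G$ is forbidden via Lemma~\ref{lemma:distance2inducedmonotone} one needs {\sf bull} to be an \emph{isometric} (distance-preserving) induced subgraph, or at least diameter $2$ suffices since any pair at distance $\le 2$ in $G$ realizes its $G$-distance, but the entries of $D(G,X_G)$ restricted to $V({\sf bull})$ could differ from $D({\sf bull}, X_{\sf bull})$ if two non-adjacent {\sf bull}-vertices are adjacent in $G$ — however that cannot happen for an induced subgraph. The genuine gap is elsewhere: Lemma~\ref{lemma:diameter2trivialdistance} only covers $G \in F$, and it is conceivable the authors want a self-contained verification that $\Phi(\Ga) = 3$ via an explicit $3\times 3$ minor of $D(\Ga, X_{\Ga})$ evaluating to a unit. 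So the safe plan is: (1) label $V({\sf bull}) = \{1,2,3,4,5\}$ with $\{1,2,3\}$ the triangle and $4,5$ pendant at $1,2$ respectively; (2) write down $D({\sf bull}, X_{\sf bull})$ explicitly; (3) exhibit a specific $3 \times 3$ submatrix whose determinant, as a polynomial in the $x_i$, has a monomial with coefficient $\pm 1$ — e.g. test the rows $\{3,4,5\}$ against columns $\{1,2,3\}$ or similar, aiming for a determinant of the form $\pm x_3 + (\text{other terms})$ or a constant; (4) conclude $I_3({\sf bull}, X_{\sf bull}) = \langle 1 \rangle$, hence $\Phi({\sf bull}) \ge 3$, and with Lemma~\ref{lemma:diameter2trivialdistance} equality; (5) apply Lemma~\ref{lemma:distance2inducedmonotone} to propagate to every induced supergraph, and Lemma~\ref{lemma:diameter2trivialdistance} again for the no-smaller-forbidden-subgraph clause. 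The main obstacle is purely computational: finding the right $3\times 3$ submatrix in step (3) so that the determinant is visibly a unit in $\mathbb{Z}[X_{\sf bull}]$; a convenient choice is likely one whose rows/columns hit the two pendant vertices, since those produce many $1$'s and $2$'s in $D({\sf bull})$ and tend to yield small determinants.
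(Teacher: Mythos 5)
There is a genuine gap, and it is located exactly where you talked yourself out of the difficulty. The {\sf bull} does \emph{not} have diameter $2$: its two pendant vertices are attached to \emph{distinct} vertices of the central triangle, so the unique shortest path between them is leaf--triangle vertex--triangle vertex--leaf, of length $3$. Consequently Lemma~\ref{lemma:distance2inducedmonotone} does not apply, and this is precisely why $\Ga$ is not folded into Proposition~\ref{proposition:forbiddensimplecases} with the diameter-$2$ graphs. Your step (5), which propagates triviality of $I_3(\Ga,X_\Ga)$ to every induced supergraph via that lemma, therefore fails: if $G$ contains an external vertex adjacent to both leaves of the induced {\sf bull}, the two leaves are at distance $2$ in $G$, so the submatrix of $D(G,X_G)$ indexed by $V(\Ga)$ has a $2$ where $D(\Ga,X_\Ga)$ has a $3$, and nothing you computed about $D(\Ga,X_\Ga)$ says anything about that different matrix.

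The paper's proof handles exactly this dichotomy. If no vertex of $G$ is adjacent to both leaves, the distance between them in $G$ is still $3$, the embedding is isometric, and Lemma~\ref{lemma:inducemonotone} gives $\Phi(G)\geq\Phi(\Ga)=3$. If some vertex is adjacent to both leaves, their distance in $G$ is $2$, and the paper writes down the resulting $5\times 5$ submatrix $M$ of $D(G,X_G)$ explicitly (with $2$'s in the leaf--leaf entries) and verifies that $\langle\minors_3(M)\rangle=\langle 1\rangle$, so $\langle 1\rangle\subseteq I_3(G,X_G)$. To repair your proposal you would need to add this second computation; the single minor calculation on $D(\Ga,X_\Ga)$ in your step (3) only establishes $\Phi(\Ga)=3$, which is the part already covered by Lemma~\ref{lemma:diameter2trivialdistance}.
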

\begin{proof}
    If there is no vertex $u$ in $G$ adjacent to the leaf vertices of \Ga, then distance  in $G$ between the two leaf vertices is 3.
    Therefore, by Lemma~\ref{lemma:inducemonotone}, $\Phi(G)\geq\Phi(\Ga)=3$, and $G\in{\sf Forb}_2$.
    On the other hand, if the distance, in $G$, between the two leaf vertices of $\Ga$ is 2, then the submatrix of $D(G,X_G)$ associated with the vertices of $\Ga$ is
\[
M=D(G,X_G)[V(\Ga)]=
\begin{bmatrix}
    u & 2 & 2 & 2 & 1\\
    2 & v & 2 & 1 & 2\\
    2 & 2 & x_1 & 1 & 1\\
    2 & 1 & 1 & x_2 & 1\\
    1 & 2 & 1 & 1 & x_3

\end{bmatrix},
\]
which have $\langle \minors_3(M)\rangle=\langle 1\rangle$.
From which follows that $\langle 1\rangle=\langle \minors_3(M)\rangle\subseteq I_3(G,X_G)$, and thus $G\in{\sf Forb}_2$.
\end{proof}

\begin{lemma}
    $\Gf$ is in ${\sf Forb}_2$.
\end{lemma}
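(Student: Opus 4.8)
The plan is to mimic the structure of the proof that $\Ga$ is in ${\sf Forb}_2$ (Lemma~\ref{lemma:bullisforbidden}), using a case analysis according to how the distances inside a copy of $\Gf$ can be realized inside an ambient graph $G$. First I would record the structure of $\Gf$ from Figure~\ref{fig:for2}: reading off the edges drawn, $\Gf$ has vertex set $\{v_0,v_1,v_2,v_3,v_4,v_5\}$ with edges $v_0v_4$, $v_1v_5$, $v_2v_3$, $v_2v_5$, $v_3v_5$, $v_4v_5$. Thus $v_5$ is adjacent to $v_1,v_2,v_3,v_4$, the pair $v_2,v_3$ is an edge, and $v_0$ hangs off $v_4$ as a pendant. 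In $\Gf$ itself the only pairs at distance larger than $2$ are those involving $v_0$ and $v_1$: $d(v_0,v_1)=3$, $d(v_0,v_2)=d(v_0,v_3)=3$, $d(v_1,v_2)=d(v_1,v_3)=2$, $d(v_1,v_4)=2$, $d(v_0,v_5)=2$. By Lemma~\ref{lemma:diameter2trivialdistance} we know $\Phi(\Gf)=3$ and no proper induced subgraph has $\Phi=3$, so the only thing to check is that any $G$ containing $\Gf$ as an induced subgraph has $\Phi(G)\ge 3$, i.e.\ $I_3(G,X_G)=\langle 1\rangle$.

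The key step is the following dichotomy on the ambient distances. If in $G$ every pair of vertices of $V(\Gf)$ that is non-adjacent in $\Gf$ still has distance equal to its $\Gf$-distance (in particular the pendant $v_0$ is not ``short-circuited''), then the hypothesis of Lemma~\ref{lemma:inducemonotone} holds for $H=\Gf$, so $\Phi(G)\ge\Phi(\Gf)=3$ and $G\in{\sf Forb}_2$. Otherwise some non-edge of $\Gf$ is contracted in distance; the only non-edges whose distance can drop (to $2$) are the ones incident to $v_0$, namely $v_0v_1$, $v_0v_2$, $v_0v_3$ (the pair $v_0v_5$ already has distance $2$). I would then split into the cases according to which of $d_G(v_0,v_1)$, $d_G(v_0,v_2)$, $d_G(v_0,v_3)$ equal $2$ rather than $3$; by the symmetry of $\Gf$ exchanging $v_2\leftrightarrow v_3$ this reduces the bookkeeping. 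In each case the $6\times 6$ submatrix $M=D(G,X_G)[V(\Gf)]$ is fully determined except for the diagonal indeterminates $x_0,\dots,x_5$, and I would exhibit an explicit $3\times 3$ minor of $M$ that evaluates to $\pm 1$, forcing $\langle\minors_3(M)\rangle=\langle 1\rangle$ and hence, by Lemma~\ref{lemma:distance2inducedmonotone}-style containment $\langle\minors_3(M)\rangle\subseteq I_3(G,X_G)$, that $G\in{\sf Forb}_2$. (As in Lemma~\ref{lemma:bullisforbidden} one may note that when \emph{all three} of those distances are $2$ the submatrix $M$ has diameter $2$, so Lemma~\ref{lemma:distance2inducedmonotone} applies directly together with Lemma~\ref{lemma:diameter2trivialdistance}.)

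Concretely, for a fixed case the off-diagonal part of $M$ is a $0/1/2/3$ matrix and I would pick three rows and three columns avoiding the diagonal positions, e.g.\ rows indexed by $\{v_0,v_2,v_4\}$ and columns by $\{v_1,v_3,v_5\}$, so that the resulting $3\times3$ determinant is a constant integer; the task is just to check it is a unit, and if a given triple does not work, a different disjoint choice will. I expect the main obstacle to be purely organizational rather than deep: one must make sure the finitely many sub-cases for the distances $d_G(v_0,v_i)\in\{2,3\}$ are exhaustive (they are, since these are the only non-edges of $\Gf$ with a vertex not dominated by $v_5$, and all other pairwise distances are already forced to be $1$ or $2$ because $v_5$ is adjacent to $v_1,v_2,v_3,v_4$ and $v_0$ is adjacent to $v_4$), and to produce in each case a genuinely unimodular $3\times 3$ minor of the (otherwise) explicit matrix $M$. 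This is a short finite check of the same flavor as the $\Ga$ computation, and can alternatively be verified with the Macaulay2 code of \cite{at} applied to the finitely many completions of $M$.
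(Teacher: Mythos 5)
Your overall strategy is the same as the paper's: restrict to the $6\times 6$ submatrix $M=D(G,X_G)[V(\Gf)]$, observe that the only ambient distances not forced to their $\Gf$-values are $d_G(v_0,v_1),d_G(v_0,v_2),d_G(v_0,v_3)\in\{2,3\}$, and show $\langle\minors_3(M)\rangle=\langle 1\rangle$ in every case (the first half of your dichotomy, invoking Lemma~\ref{lemma:inducemonotone} when no distance drops, is correct but is subsumed by this). The gap is that the proof stops exactly where the work is: you assert, without exhibiting anything, that in each of the finitely many cases some off-diagonal $3\times 3$ minor of $M$ equals $\pm 1$, adding that ``if a given triple does not work, a different disjoint choice will.'' That existence claim is not justified and is not automatic --- triviality of $\langle\minors_3(M)\rangle$ could in principle be witnessed only by a combination of non-unit minors (as in $\langle 2,3\rangle=\langle 1\rangle$) --- so the finite check genuinely has to be carried out. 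The paper does it in one stroke: with $y_2=d_G(v_0,v_1)$ one computes $\det(M[\{1,2,5\},\{0,3,4\}])=1-y_2$ and $\det(M[\{1,2,4\},\{0,3,5\}])=4-y_2$, and since $y_2\in\{2,3\}$ these two integers always generate $\langle 1\rangle$; in particular no case split over $d_G(v_0,v_2)$ and $d_G(v_0,v_3)$ is needed. Supplying such minors is the entire content of the lemma.

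A secondary error is the parenthetical shortcut for the case where all three distances drop to $2$. Lemma~\ref{lemma:distance2inducedmonotone} requires the induced subgraph $H$ itself to have diameter $2$, and $\Gf$ has diameter $3$; moreover Lemma~\ref{lemma:diameter2trivialdistance} computes $\Phi(\Gf)$ from $D(\Gf,X_{\Gf})$, whose off-diagonal entries are the intrinsic distances. When the ambient distances are smaller, $M$ is a different matrix, and $\langle\minors_3(D(\Gf,X_{\Gf}))\rangle=\langle 1\rangle$ does not transfer to $\langle\minors_3(M)\rangle$. (The containment $\langle\minors_3(M)\rangle\subseteq I_3(G,X_G)$ is immediate for any submatrix of $D(G,X_G)$; it is the triviality of $\langle\minors_3(M)\rangle$ that still requires a computation in that case, which your main plan would cover if executed.)
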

\begin{proof}
Let $G$ be a graph containing $\Gf$ as induced subgraph, and let $M$ be the submatrix of $D(G,X_G)$ associated with the vertices of $\Gf$ that is described in Figure \ref{figue:gf} and where $d_G(v_0,v_3)=y_0$, $d_G(v_0,v_2)=y_1$ and $d_G(v_0,v_1)=y_2$.

\begin{figure}[h!]
\begin{minipage}[t]{0.4\textwidth}
$
\begin{bmatrix}
    x_0 & y_2 & y_1 & y_0 & 2 & 1\\
    y_2 & x_1 & 2 & 2 & 1 & 2\\
    y_1 & 2 & x_2 & 1 & 1 & 2\\
    y_0 & 2 & 1 & x_3 & 1 & 2\\
     2 & 1 & 1 & 1 & x_4 & 1\\
     1 & 2 & 2 & 2 & 1 & x_5

\end{bmatrix}
$
\end{minipage}
\begin{minipage}[c]{0.4\textwidth}
\begin{tikzpicture}[scale=1]
	\tikzstyle{every node}=[minimum width=0pt, inner sep=2pt, circle]
    \draw (60:1) node (v0) [draw] {\small 0};
	\draw (120:1) node (v1) [draw] {\small 1};
	\draw (180:1) node (v2) [draw] {\small 2};
	\draw (240:1) node (v3) [draw] {\small 3};
	\draw (300:1) node (v5) [draw] {\small 4};
    \draw (0:1) node (v4) [draw] {\small 5};
    \Edge[](v0)(v4);
    \Edge[](v1)(v5);
    \Edge[](v2)(v3);
    \Edge[](v2)(v5);
    \Edge[](v3)(v5);
    \Edge[](v4)(v5);
\end{tikzpicture}
\end{minipage}
\caption{Submatrix associated with $\Gf$.}
\label{figue:gf}
\end{figure}

We have that $\det(M[\{1,2,5\},\{0,3,4\}])=-y_2+1$ and $\det(M[\{1,2,4\},\{0,3,5\}])=-y_2+4$.
Thus
$\langle -y_2+1, -y_2+4\rangle\subseteq\langle \minors_3(M) \rangle\subseteq I_3(G,X_G)$.
There are two cases either $y_2$ is equal to $2$ or $3$.
In both cases we have $1\in I_3(G,X_G)$, from which follows that $G\in{\sf Forb}_2$.
\end{proof}

\begin{lemma}\label{Lemma:5-panforbidden}
    {\Gg} is in ${\sf Forb}_2$.
\end{lemma}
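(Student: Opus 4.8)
The plan is to follow the template of the proofs for $\Ga$ and $\Gf$: take an arbitrary graph $G$ having $\Gg$ as an induced subgraph, look at the principal submatrix $M = D(G,X_G)[V(\Gg)]$, and exhibit $3\times 3$ minors of $M$ that generate the unit ideal, so that $\langle 1\rangle\subseteq\langle\minors_3(M)\rangle\subseteq I_3(G,X_G)$ and hence $G$ is forbidden for $\Lambda_2$. Label the vertices of $\Gg$ as in Figure~\ref{fig:for2}, so that $v_0$ is the pendant vertex with $v_0v_5\in E$ and $v_1v_2v_3v_5v_4v_1$ is the $5$-cycle. First I would determine which entries of $M$ are forced. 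Since the $5$-cycle has diameter $2$ and is induced in $G$, every pair of cycle vertices has the same distance in $G$ as in $\Gg$. For the pendant vertex, $d_G(v_0,v_5)=1$, and $d_G(v_0,v_3)=d_G(v_0,v_4)=2$ because $v_5$ supplies a path of length $2$ while non-adjacency rules out length $1$. The only entries not pinned down are $y_1:=d_G(v_0,v_1)$ and $y_2:=d_G(v_0,v_2)$, and each lies in $\{2,3\}$, being at least $2$ by non-adjacency and at most $3$ via the paths $v_0v_5v_4v_1$ and $v_0v_5v_3v_2$. Thus
\[
M=\begin{bmatrix}
x_0 & y_1 & y_2 & 2 & 2 & 1\\
y_1 & x_1 & 1 & 2 & 1 & 2\\
y_2 & 1 & x_2 & 1 & 2 & 2\\
2 & 2 & 1 & x_3 & 2 & 1\\
2 & 1 & 2 & 2 & x_4 & 1\\
1 & 2 & 2 & 1 & 1 & x_5
\end{bmatrix},
\]
with rows and columns indexed by $v_0,\dots,v_5$.

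Next I would compute three off-diagonal $3\times 3$ minors of $M$; none of them meets the diagonal, so each is an honest integer or linear expression in $y_1,y_2$:
\[
\det M[\{v_1,v_3,v_4\},\{v_0,v_2,v_5\}]=4-y_1,\qquad \det M[\{v_2,v_3,v_4\},\{v_0,v_1,v_5\}]=y_2-4,
\]
\[
\det M[\{v_0,v_3,v_4\},\{v_1,v_2,v_5\}]=3-y_1-y_2 .
\]
All three lie in $\langle\minors_3(M)\rangle\subseteq I_3(G,X_G)$. A short case check then finishes the argument. If $y_1=3$, the first minor equals $1$. If $y_1=2$ and $y_2=3$, then $I_3(G,X_G)$ contains $(4-y_1)+(y_2-4)=y_2-y_1=1$. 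If $y_1=y_2=2$, the third minor equals $-1$. In every case $1\in I_3(G,X_G)$, so $I_3(G,X_G)=\langle 1\rangle$ and $G$ is forbidden for $\Lambda_2$. Applying this with $G=\Gg$ itself (where $y_1=y_2=3$) already shows $\Phi(\Gg)\ge 3$, and by Lemma~\ref{lemma:diameter2trivialdistance} no proper induced subgraph of $\Gg$ has $\Phi$ equal to $3$; therefore $\Gg\in{\sf Forb}_2$.

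The only genuinely delicate point is organizing the case split so that a single short list of minors covers all four values of $(y_1,y_2)$. The cases $y_1=3$ and $y_1\neq y_2$ drop out of the first two minors immediately, but the case $y_1=y_2=2$ (in which $v_0$ behaves like a vertex at distance $2$ from everything in the cycle) is the stubborn one; the minor $M[\{v_0,v_3,v_4\},\{v_1,v_2,v_5\}]$, the only one of the three using the pendant row, is exactly what resolves it, and locating such a minor is the small search the proof requires. Everything else is a direct reading of the distances in the $5$-pan.
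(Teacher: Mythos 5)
Your proof is correct and follows essentially the same strategy as the paper: restrict to the $6\times 6$ submatrix of $D(G,X_G)$ on $V(\Gg)$, pin down all distances except $d_G(v_0,v_1),d_G(v_0,v_2)\in\{2,3\}$, and exhibit $3\times 3$ minors generating $\langle 1\rangle$. The only difference is the choice of minors: the paper uses $5-x_2$, $3x_2-4$ and $-5$, which generate the unit ideal identically and so avoid your (correct, but slightly longer) case split on the two undetermined distances.
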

\begin{proof}
Let $G$ be a graph having {\Gg} as induced subgraph, and let $M$ be the submatrix of $D(G,X_G)$ associated with the vertices of {\Gg} that is shown in Figure \ref{figue:gg} and where $d_G(v_0,v_1)=y_1$ and $d_G(v_0,v_2)=y_0$.

%\begin{minipage}[t]{0.5textwidth}
\begin{figure}[h!]
\begin{minipage}[c]{0.4\textwidth}
$
\begin{bmatrix}
    x_0 & y_1 & y_0 & 2 & 2 & 1\\
    y_1 & x_1 & 1 & 2 & 1 & 2\\
    y_0 & 1 & x_2 & 1 & 2 & 2\\
    2 & 2 & 1 & x_3 & 2 & 1\\
    2 & 1 & 2 & 2 & x_4 & 1\\
    1 & 2 & 2 & 1 & 1 & x_5
\end{bmatrix},
$
\end{minipage}
\begin{minipage}[c]{0.4\textwidth}
\begin{tikzpicture}[scale=1]
	\tikzstyle{every node}=[minimum width=0pt, inner sep=2pt, circle]
    \draw (60:1) node (v4) [draw] {\small 4};
	\draw (120:1) node (v1) [draw] {\small 1};
	\draw (180:1) node (v2) [draw] {\small 2};
	\draw (240:1) node (v3) [draw] {\small 3};
	\draw (300:1) node (v0) [draw] {\small 0};
    \draw (0:1) node (v5) [draw] {\small 5};
    \Edge[](v0)(v5);
    \Edge[](v1)(v2);
    \Edge[](v1)(v4);
    \Edge[](v2)(v3);
    \Edge[](v3)(v5);
    \Edge[](v4)(v5);
\end{tikzpicture}
\end{minipage}
\caption{Submatrix associated with $\Gg$.}
\label{figue:gg}
\end{figure}

%\end{minipage}

We have that $\det(M[\{2,3,4\},\{1,2,5\}])=5-x_2$, $\det(M[\{2,4,5\},\{1,2,3\}])=3x_2-4$ and $\det(M[\{0,1,2\},\{3,4,5\}])=-5$.
Since $\langle 1\rangle=\langle 5-x_2, 3x_2-4,-5\rangle\subseteq\langle \minors_3(M) \rangle\subseteq I_3(G,X_G)$, then $G\in{\sf Forb}_2$.
\end{proof}

\begin{lemma}
    $\Gh$ is in ${\sf Forb}_2$.
\end{lemma}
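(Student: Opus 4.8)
The plan is to follow the template used for $\Ga$, $\Gf$ and $\Gg$. Fix a graph $G$ containing $\Gh$ as an induced subgraph and consider the principal submatrix $M=D(G,X_G)[V(\Gh)]$ of the generalized distance matrix. Since $\Gh$ has diameter larger than $2$ (so it is not covered by Proposition~\ref{proposition:forbiddensimplecases}, and Lemma~\ref{lemma:distance2inducedmonotone} does not apply directly), not every off-diagonal entry of $M$ is forced: entries indexed by edges of $\Gh$ equal $1$, entries indexed by non-edges of $\Gh$ that already realize distance $2$ inside $\Gh$ equal $2$, but for each pair $\{v_a,v_b\}$ of vertices of $\Gh$ with $d_{\Gh}(v_a,v_b)\ge 3$ all we know is that the entry is an integer $y_k=d_G(v_a,v_b)$ with $2\le y_k\le d_{\Gh}(v_a,v_b)$, the upper bound holding because a shortest path of $\Gh$ remains a path of $G$. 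I would display this $6\times6$ symmetric matrix, with diagonal indeterminates $x_0,\dots,x_5$ and the unknowns $y_k$ in the appropriate positions, next to a labelled drawing of $\Gh$, exactly as in Figures~\ref{figue:gf} and~\ref{figue:gg}.

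The core of the argument is then to list a few $3\times3$ submatrices of $M$ whose determinants, viewed as polynomials in the $x_i$ and the $y_k$, already generate the unit ideal of $\mathbb{Z}[X_G]$. I expect these determinants to be of two harmless kinds: either nonzero integer constants that are setwise coprime --- as with the value $-5$ together with $5-x_2$ and $3x_2-4$ in the $\Gg$ computation, which instantly gives $1\in\langle\minors_3(M)\rangle$ --- or affine polynomials in a single $y_k$ whose coefficients force triviality once one substitutes each of the two or three admissible integer values of that $y_k$, as with $-y_2+1$ and $-y_2+4$ for $y_2\in\{2,3\}$ in the $\Gf$ computation. Because $M$ is a submatrix of $D(G,X_G)$ we have $\langle\minors_3(M)\rangle\subseteq I_3(G,X_G)$, so in every admissible case $I_3(G,X_G)=\langle1\rangle$; hence every graph $G$ containing $\Gh$ as an induced subgraph has trivial third distance ideal, i.e.\ is forbidden for $\Lambda_2$.

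To conclude that $\Gh$ itself lies in ${\sf Forb}_2$, I would invoke Lemma~\ref{lemma:diameter2trivialdistance}, which gives $\Phi(\Gh)=3$ (so $\Gh$ is forbidden for $\Lambda_2$) and that no proper induced subgraph $H$ of $\Gh$ has $\Phi(H)=3$; together with the upward-closure established in the previous paragraph this is exactly the minimality demanded by the definition of ${\sf Forb}_2$.

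The main obstacle is the middle step: because $\Gh$ has diameter at least $3$, several off-diagonal entries are the unknowns $y_k$, each ranging over two or three values, and one cannot finish by a single clean citation as in the diameter-$2$ cases. The task is to pin down a small, fixed family of $3\times3$ minors whose common zero set avoids every admissible assignment of the $y_k$ simultaneously; in practice I would locate them with the Macaulay2 routine of \cite[Appendix]{at} used in Lemma~\ref{lemma:diameter2trivialdistance}, and then record the resulting B\'ezout-type identity explicitly. A minor bookkeeping point --- that the $y_k$ range over only finitely many values --- is automatic, since a shortest path inside $\Gh$ furnishes the explicit upper bound $d_{\Gh}(v_a,v_b)$ for each $y_k=d_G(v_a,v_b)$.
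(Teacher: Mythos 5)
Your overall template (set up the $6\times6$ submatrix $M=D(G,X_G)[V(\Gh)]$ with indeterminate diagonal and unknown distances $y_k$ for the far-apart pairs, then exhibit $3$-minors generating $\langle 1\rangle$ for every admissible assignment of the $y_k$) is exactly the paper's strategy for $\Gf$, $\Gg$, $\Gj$ and $\Gm$, and your framing of the admissible ranges $2\le y_k\le d_{\Gh}(v_a,v_b)$ and of the final minimality step via Lemma~\ref{lemma:diameter2trivialdistance} is correct. But for $\Gh$ the plan breaks down at precisely the step you flag as ``the main obstacle'': there is \emph{no} family of $3$-minors of $M$ alone that works for all admissible assignments. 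The paper computes a Gr\"obner basis of $\langle\minors_3(M)\rangle$, checks the finitely many assignments ${\bf d}$ of the five unknowns, and finds four assignments for which the generic argument fails; two of these, $(2,2,3,2,2)$ and $(3,3,3,3,3)$, cannot be disposed of by any combination of minors of $M$.

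The missing idea is that for those residual cases one must leave $V(\Gh)$ and adjoin a seventh vertex of $G$ whose existence is forced by the combinatorial meaning of the distance values: if $d_G(v_1,v_4)=2$ there is a common neighbour $u$ of $v_1$ and $v_4$, and if $d_G(v_0,v_1)=3$ there is an internal vertex $u$ of a geodesic that can be chosen distinct from $v_4$. In each case the paper forms the $7\times7$ submatrix $M'$ on $V(\Gh)\cup\{u\}$ (with new unknowns for the distances from $u$ to the remaining vertices) and verifies $\langle\minors_3(M')\rangle=\langle 1\rangle$ identically. Without this augmentation step your argument cannot close, since a graph $G$ realizing one of the two bad distance vectors would slip through; so as written the proposal has a genuine gap, even though everything else in it is sound.
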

\begin{proof}
Let $G$ be a graph having $\Gh$ as induced subgraph, and let $M$ be the submatrix of $D(G,X_G)$ associated with the vertices of $\Gh$ that is shown in Figure~\ref{figue:gh} and where $d_G(v_0,v_1)=y_4$, $d_G(v_0,v_2)=y_3$, $d_G(v_0,v_5)=y_2$, $d_G(v_1,v_4)=y_1$ and $d_G(v_2,v_4)=y_0$.

\begin{figure}[h!]
\begin{minipage}[c]{0.4\textwidth}
$
\begin{bmatrix}
    x_0 & y_4 & y_3 & 2 & 1 & y_2\\
    y_4 & x_1 & 1 & 2 & y_1 & 1\\
    y_3 & 1 & x_2 & 2 & y_0 & 1\\
    2 & 2 & 2 & x_3 & 1 & 1\\
    1 & y_1 & y_0 & 1 & x_4 & 2\\
    y_2 & 1 & 1 & 1 & 2 & x_5
\end{bmatrix},
$
\end{minipage}
\begin{minipage}[c]{0.4\textwidth}
\begin{tikzpicture}[scale=1]
	\tikzstyle{every node}=[minimum width=0pt, inner sep=2pt, circle]
    \draw (60:1) node (v5) [draw] {\small 5};
	\draw (120:1) node (v1) [draw] {\small 1};
	\draw (180:1) node (v2) [draw] {\small 2};
	\draw (240:1) node (v3) [draw] {\small 3};
	\draw (300:1) node (v4) [draw] {\small 4};
    \draw (0:1) node (v0) [draw] {\small 0};
    \Edge[](v0)(v4);
    \Edge[](v1)(v2);
    \Edge[](v1)(v5);
    \Edge[](v2)(v5);
    \Edge[](v3)(v4);
    \Edge[](v3)(v5);
\end{tikzpicture}
\end{minipage}
\caption{Submatrix associated with $\Gh$.}
\label{figue:gh}
\end{figure}

Let
\begin{eqnarray*}
I=\{
y_0 y_2 + 2 y_0 - y_1 y_2 - 2 y_1,
y_0 y_3 - 2 y_0 - 4 y_3 + 5,\\
y_0 y_4 - 2 y_0 - 4 y_4 + 5,
3 y_0 - 3 y_1,\\
2 y_1 y_2 - 2 y_1 - y_2 - 3 y_4 + 4,
y_1 y_3 - 2 y_1 - 4 y_3 + 5,\\
y_1 y_4 - 2 y_1 - 4 y_4 + 5,
y_2 y_3 + 6 y_2 y_4 - 8 y_2 + 2 y_3 - 3 y_4^2 + 2,\\
7 y_2 y_4 - 8 y_2 - 3 y_4^2 + 2 y_4 + 2,
3 y_3 - 3 y_4\},\\
\end{eqnarray*}
and
\begin{eqnarray*}
J=\{
x_0 - 2y_2y_4 + 2y_2 + y_3y_4 - 2y_3,
x_1x_2 - 5x_1 - 5x_2 + 9,\\
x_1x_3 - 2x_1 - x_3 + 2,
x_1y_0 - x_1 - 2y_0 + y_1 + 1,\\
x_1y_2 - x_1 - y_2 + 1,
x_1y_3 - 2x_1 - 8y_3 + 7y_4 + 2,\\
3x_1 - 3,
x_2x_3 - 2x_2 - x_3 + 2,
x_2y_1 - x_2 + y_0 - 2y_1 + 1,\\
x_2y_2 - x_2 - y_2 + 1,
x_2y_4 - 2x_2 + 7y_3 - 8y_4 + 2,
3x_2 - 3,\\
x_3y_0 - 2x_3 - 2y_0 + 7,
x_3y_1 - 2x_3 - 2y_1 + 7,\\
x_3y_2 - x_3y_4 - 4y_2 + 2y_4 + 2,
x_3y_3 - x_3y_4 - 2y_3 + 2y_4,\\
2x_3y_4 - x_3 - y_4 - 4,
x_4 + y_0y_1 - y_0 - 4y_1 + 2,\\
x_5 - 3y_1y_2 + 3y_1 - 2y_2 + 6y_4 - 6\}.\\
\end{eqnarray*}
It can be checked that the lexicographic Gr\"obner basis of the ideal $\langle \minors_3(M)\rangle$ is equal to $\langle I\cup J\rangle$.
Furthermore, let ${\bf d}$ be a vector such that the $i$-th entry is taken over a possible values of $y_i$, that is, $d_0\in \{2,3\}$, $d_1\in \{2,3\}$, $d_2\in \{2,3\}$, $d_3\in \{2,3,4\}$ and $d_4\in \{2,3,4\}$.
Note that if $\gcd(I|_{\bf y = d})$ is equal to 1, then $\langle \minors_3(M)\rangle=\langle 1 \rangle$, which implies that $I_3(G,X_G)$ is trivial.
It can be verified that $\gcd(I|_{\bf y=d})$ is equal to 1, except for the following vectors: $(2, 2, 2, 2, 2)$, $(2, 2, 3, 2, 2)$, $(2, 2, 3, 3, 3)$ and $(3, 3, 3, 3, 3)$.

Consider $p=x_3 y_0 - 2x_3 - 2y_0 + 7\in J$ and $q=x_3 y_2 - x_3 y_4 - 4 y_2 + 2 y_4 + 2\in J$.
Since $p|_{y_0=2}=3$ and $q|_{y_2=2,y_4=2}=-2$, then in the case when ${\bf d}=(2, 2, 2, 2, 2)$, $\gcd(J|_{\bf y = d})=1$, and $I_3(G,X_G)$ is trivial.
Since $q|_{y_2=3,y_4=3}=-4$, then in the case when ${\bf d}=(2, 2, 3, 3, 3)$, $\gcd(J|_{\bf y = d})=1$, and $I_3(G,X_G)$ is trivial.

Let us consider the case associated with vector $(2, 2, 3, 2, 2)$.
In particular, $d_1=2=d_G(v_1,v_4)$ implies there exists a vertex $u\in G$ adjacent with $v_1$ and $v_4$.
Let $M'$ be the submatrix of $D(G,X_G)$ associated with the vertices $V(\Gh)\cup\{u\}$, that is
\[
M'=
\begin{bmatrix}
    x_0 & 2 & 2 & 2 & 1 & 3 & a\\
    2 & 2 & 1 & 2 & 2 & 1 & 1\\
    2 & 1 & x_2 & 2 & 2 & 1 & c\\
    2 & 2 & 2 & x_3 & 1 & 1 & d\\
    1 & 2 & 2 & 1 & x_4 & 2 & 1\\
    3 & 1 & 1 & 1 & 2 & x_5 & f\\
    a & 1 & c & d & 1 & f & x_u
\end{bmatrix},
\]
Since $\langle \minors_3(M')\rangle=\langle 1 \rangle$, then $I_3(G,X_G)$ is trivial.

Finally, let us consider the case associated with vector $(3, 3, 3, 3, 3)$.
Since $d_G(v_0,v_1)=3$, then there exists a path $v_0, v, u, v_1$ such that $u\neq v_4$, since otherwise $d_G(v_4,v_1)$ would be equal to 2.
Let $M'$ be the submatrix of $D(G,X_G)$ associated with the vertices $V(\Gh)\cup\{u\}$, that is
\[
M'=
\begin{bmatrix}
    x_0 & 3 & 3 & 2 & 1 & 3 & 2\\
    3 & x_1 & 1 & 2 & 3 & 1 & 1\\
    3 & 1 & x_2 & 2 & 3 & 1 & c\\
    2 & 2 & 2 & x_3 & 1 & 1 & d\\
    1 & 3 & 3 & 1 & x_4 & 2 & e\\
    3 & 1 & 1 & 1 & 2 & x_5 & f\\
    2 & 1 & c & d & e & f & x_u
\end{bmatrix},
\]
Since $\langle \minors_3(M')\rangle=\langle 1 \rangle$, then $I_3(G,X_G)$ is trivial.

\end{proof}

\begin{lemma}
    $\Gj$ is in ${\sf Forb}_2$.
\end{lemma}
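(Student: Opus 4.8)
The plan is to follow the pattern of the preceding lemmas, such as those for $\Gf$ and $\Gh$. Label the vertices of $\Gj$ by $v_0,\dots,v_5$ so that its edges are $v_0v_5$, $v_1v_5$, $v_2v_3$, $v_2v_4$, $v_3v_4$, $v_3v_5$, $v_4v_5$. First I would check that in $\Gj$ the only pairs of vertices at distance $3$ are $\{v_0,v_2\}$ and $\{v_1,v_2\}$ (both $v_0$ and $v_1$ have $v_5$ as their unique neighbour, and $v_5\not\sim v_2$), while every other non-adjacent pair is at distance $2$. Hence, if $G$ contains $\Gj$ as an induced subgraph, then in the submatrix $M=D(G,X_G)[V(\Gj)]$ every off-diagonal entry is forced to be $1$ or $2$, except $y_0:=d_G(v_0,v_2)$ and $y_1:=d_G(v_1,v_2)$, each of which lies in $\{2,3\}$.

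Next I would evaluate three $3\times 3$ minors of $M$ on disjoint sets of rows and columns: the minor on rows $\{v_0,v_1,v_3\}$, columns $\{v_2,v_4,v_5\}$ is $y_0-y_1$; the minor on rows $\{v_0,v_2,v_3\}$, columns $\{v_1,v_4,v_5\}$ is $4-y_1$; and the minor on rows $\{v_1,v_2,v_3\}$, columns $\{v_0,v_4,v_5\}$ is $4-y_0$. Hence $\langle 4-y_0,\,4-y_1\rangle\subseteq\langle\minors_3(M)\rangle\subseteq I_3(G,X_G)$, so $1\in I_3(G,X_G)$ and $G\in{\sf Forb}_2$ unless $y_0=y_1=2$; moreover, when $y_0=2$ the last of these minors is the integer $2$, so $2\in I_3(G,X_G)$ in the remaining case.

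So suppose $y_0=y_1=2$. Then $d_G(v_0,v_2)=2$, so some vertex $u$ is adjacent to both $v_0$ and $v_2$; since no vertex of the induced subgraph $\Gj$ is adjacent to both $v_0$ and $v_2$, necessarily $u\notin V(\Gj)$. By the triangle inequality applied to the distances already fixed, $c:=d_G(u,v_3)$, $d:=d_G(u,v_4)$ and $e:=d_G(u,v_5)$ all lie in $\{1,2\}$. Working in $M'=D(G,X_G)[V(\Gj)\cup\{u\}]$, the minor on rows $\{v_1,v_4,u\}$, columns $\{v_2,v_3,v_5\}$ equals $1-c$ and the minor on rows $\{v_1,v_3,u\}$, columns $\{v_2,v_4,v_5\}$ equals $1-d$, so $G\in{\sf Forb}_2$ unless $c=d=1$; and in that last case the minor on rows $\{v_1,v_3,u\}$, columns $\{v_0,v_4,v_5\}$ equals $1-2e$, which is $-1$ for $e=1$ and $-3$ for $e=2$. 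In either case, together with $2\in I_3(G,X_G)$, this yields $1\in I_3(G,X_G)$, so $G\in{\sf Forb}_2$.

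The distance computation in $\Gj$ and the determinant evaluations are routine. The step I expect to be the main obstacle is the reduction to $y_0=y_1=2$: there the minors of $M$ alone no longer trivialise $I_3(G,X_G)$, which forces one to introduce the auxiliary vertex $u$ and then to run the finite case analysis on the possible values of $d_G(u,v_3),d_G(u,v_4),d_G(u,v_5)$; the small amount of bookkeeping is in selecting the few $3\times 3$ minors of the $7\times 7$ matrix $M'$ that dispose of all those subcases at once. As in the proof for $\Gh$, an alternative is to verify by a lexicographic Gr\"obner basis computation that $\langle\minors_3(M')\rangle=\langle1\rangle$ for every admissible choice of $d_G(u,v_3),d_G(u,v_4),d_G(u,v_5)$.
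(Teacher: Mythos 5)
Your proof is correct, and it verifies: the three minors of $M$ you list do evaluate to $y_0-y_1$, $4-y_1$ and $4-y_0$, and the three minors of $M'$ to $1-c$, $1-d$ and $1-2e$, so every subcase yields a unit or a pair of coprime integers in $I_3(G,X_G)$. The first half of your argument coincides with the paper's: it too reduces to the case $y_0=y_1=2$ via minors equal to $4-y_0$ and $4-y_1$. Where you diverge is in closing that residual case. The paper never leaves the $6\times 6$ matrix $M$: it picks a $3\times 3$ submatrix whose row and column index sets overlap in $v_5$, so that the diagonal indeterminate $x_5$ enters, obtaining the minor $1-2x_5$; then $\langle 2,\,1-2x_5\rangle=\langle 1\rangle$ because $1=(1-2x_5)+x_5\cdot 2$. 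You instead adjoin a common neighbour $u$ of $v_0$ and $v_2$ and run a finite case analysis on $d_G(u,v_3),d_G(u,v_4),d_G(u,v_5)\in\{1,2\}$, in the style of the paper's treatment of $\Gh$ and $\Gl$. Both routes are valid; the paper's is shorter because it exploits the defining feature of distance ideals --- the indeterminates on the diagonal of $D(G,X_G)$ --- to manufacture a polynomial coprime to $2$ for free, whereas your argument uses only off-diagonal (purely numerical) entries at the cost of an extra vertex and a few more subcases.
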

\begin{proof}
Let $G$ be a graph containing $\Gj$ as induced subgraph, and let $M$ be the submatrix of $D(G,X_G)$ associated with the vertices of $\Gj$ that is shown in Figure~\ref{figue:gj} and where $d_G(v_1,v_2)=y_0$ and $d_G(v_0,v_2)=y_1$.

\begin{figure}[h!]
\begin{minipage}[c]{0.4\textwidth}
$
\begin{bmatrix}
    x_0 & 2 & y_1 & 2 & 2 & 1\\
    2 & x_1 & y_0 & 2 & 2 & 1\\
    y_1 & y_0 & x_2 & 1 & 1 & 2\\
    2 & 2 & 1 & x_3 & 1 & 1\\
    2 & 2 & 1 & 1 & x_4 & 1\\
    1 & 1 & 2 & 1 & 1 & x_5
\end{bmatrix},
$
\end{minipage}
\begin{minipage}[c]{0.4\textwidth}
\begin{tikzpicture}[scale=1]
	\tikzstyle{every node}=[minimum width=0pt, inner sep=2pt, circle]
    \draw (60:1) node (v0) [draw] {\small 0};
	\draw (120:1) node (v4) [draw] {\small 4};
	\draw (180:1) node (v2) [draw] {\small 2};
	\draw (240:1) node (v3) [draw] {\small 3};
	\draw (300:1) node (v1) [draw] {\small 1};
    \draw (0:1) node (v5) [draw] {\small 5};
    \Edge[](v0)(v5);
    \Edge[](v1)(v5);
    \Edge[](v2)(v3);
    \Edge[](v2)(v4);
    \Edge[](v3)(v4);
    \Edge[](v3)(v5);
    \Edge[](v4)(v5);
\end{tikzpicture}
\end{minipage}
\caption{Submatrix associated with $\Gj$.}
\label{figue:gj}
\end{figure}

We have $\det(M[\{1,4,5\},\{1,3,4\}])=4-y_0$ and $\det(M[\{0,4,5\},\{1,2,3\}])=4-y_1$.
Since $y_0,y_1\in\{2,3\}$,
then when one of $y_0$ or $y_1$ is equal to 3, $\langle \minors_3(M) \rangle=\langle 1 \rangle$ and $I_3(G,X_G)$ is trivial.
On the other hand, $\det(M[\{1,4,5\},\{0,3,5\}])=1-2x_5$.
Thus, when one of $y_0$ or $y_1$ is equal to 2, then $\langle 2, 1-2x_5\rangle=\langle 1\rangle$.
In all cases we have $1\in I_3(G,X_G)$, from which follows that $G\in{\sf Forb}_2$.
\end{proof}

\begin{lemma}\label{lemma:forbidden_co-twin-house}
    {\Gl} is in ${\sf Forb}_2$.
\end{lemma}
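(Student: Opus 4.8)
The plan is to follow the template of the preceding lemmas: write down the generalized distance submatrix $M=D(G,X_G)[V({\Gl})]$ for an arbitrary $G$ containing ${\Gl}$ as an induced subgraph, leaving the unforced entries as indeterminates, exhibit $3\times 3$ minors that already generate $\langle 1\rangle$, and resolve the few residual cases by adjoining an auxiliary vertex. Fix the labeling $v_0,\dots,v_5$ of the vertices of ${\Gl}$ as in Figure~\ref{fig:for2}, so that $\{v_2,v_3,v_4,v_5\}$ induces a diamond with $v_2,v_3$ the two degree-three vertices, and $v_0$, $v_1$ are the pendants attached to $v_5$ and $v_4$. Since ${\Gl}$ is induced, every non-edge of ${\Gl}$ is a non-edge of $G$; moreover every pair of non-adjacent vertices of ${\Gl}$ other than $\{v_0,v_1\}$, $\{v_0,v_4\}$, $\{v_1,v_5\}$ has a common neighbour inside ${\Gl}$, so its distance in $G$ equals its distance in ${\Gl}$. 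Writing $y_2=d_G(v_0,v_1)$, $y_1=d_G(v_0,v_4)$, $y_0=d_G(v_1,v_5)$, and noting that ${\Gl}$ already realizes a $v_0v_1$-path of length $4$ and $v_0v_4$- and $v_1v_5$-paths of length $3$, we have $y_2\in\{2,3,4\}$, $y_0,y_1\in\{2,3\}$, and
\[
M=\begin{bmatrix}
x_0 & y_2 & 2 & 2 & y_1 & 1\\
y_2 & x_1 & 2 & 2 & 1 & y_0\\
2 & 2 & x_2 & 1 & 1 & 1\\
2 & 2 & 1 & x_3 & 1 & 1\\
y_1 & 1 & 1 & 1 & x_4 & 2\\
1 & y_0 & 1 & 1 & 2 & x_5
\end{bmatrix}.
\]

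The first step is to record two minors of $M$; since $M$ is a submatrix of $D(G,X_G)$, both lie in $I_3(G,X_G)$. One has $\det(M[\{0,1,2\},\{3,4,5\}])=(y_0-2)(y_1-2)-1$, which is $\pm 1$ unless $y_0=y_1=3$; the other has $\det(M[\{0,2,4\},\{1,3,5\}])=y_2-5$, which is $\pm 1$ exactly when $y_2=4$. Hence $I_3(G,X_G)=\langle 1\rangle$, and $G$ is forbidden for $\Lambda_2$, in every case except $(y_0,y_1,y_2)\in\{(3,3,2),(3,3,3)\}$.

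For these two cases I would adjoin one vertex and pass to a $7\times 7$ submatrix, exactly as in the proof that $\Gh\in{\sf Forb}_2$. If $(y_0,y_1,y_2)=(3,3,2)$, pick $u\sim v_0,v_1$; then $u\notin V({\Gl})$ (no vertex of ${\Gl}$ is adjacent to both pendants), and $y_1=3$ and $y_0=3$ force $u\not\sim v_4$ and $u\not\sim v_5$, hence $d_G(u,v_4)=d_G(u,v_5)=2$. If $(y_0,y_1,y_2)=(3,3,3)$, take a shortest $v_0v_1$-path $v_0,w,u,v_1$; then $u\sim v_1$, $u\notin V({\Gl})$, $d_G(u,v_0)=2$, and $u\neq v_4$ because $y_1=3$. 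In each case I would form $M'=D(G,X_G)[V({\Gl})\cup\{u\}]$ with the remaining distances from $u$ kept as indeterminates over the small ranges permitted by the triangle inequality and the relations $y_0=y_1=3$, and verify — using the Macaulay2 computation of Lemma~\ref{lemma:diameter2trivialdistance}, possibly after splitting on the values of $d_G(u,v_2)$ and $d_G(u,v_3)$ — that $\langle\minors_3(M')\rangle=\langle 1\rangle$, so that $I_3(G,X_G)$ is trivial. This shows every $G$ containing ${\Gl}$ as induced subgraph is forbidden for $\Lambda_2$; minimality is then immediate from Lemma~\ref{lemma:diameter2trivialdistance}, since ${\Gl}\in F$.

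I expect the case $(y_0,y_1,y_2)=(3,3,3)$ to be the main obstacle: with all three flexible distances maximal I expect the $6\times 6$ matrix $M$ by itself not to yield a unit minor (the simple $3$-minors I have checked all become even once $y_0=y_1=y_2=3$), so the auxiliary vertex is genuinely needed, and there one must reason carefully about which vertices of a shortest $v_0v_1$-path may coincide with vertices of ${\Gl}$, and control $d_G(u,v_2)$ and $d_G(u,v_3)$, before the $3$-minor ideal of $M'$ collapses to $\langle 1\rangle$ — mirroring the most delicate part of the proof for $\Gh$.
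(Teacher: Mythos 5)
Your setup, your two key minors $\det(M[\{0,1,2\},\{3,4,5\}])=(y_0-2)(y_1-2)-1$ and $\det(M[\{0,2,4\},\{1,3,5\}])=y_2-5$, and your identification of the residual cases $(y_0,y_1,y_2)\in\{(3,3,2),(3,3,3)\}$ all agree with the paper, and your treatment of $(3,3,3)$ (adjoin the neighbour of $v_1$ on a shortest $v_0v_1$-path; it cannot be $v_4$) is, up to the automorphism of $\Gl$ swapping $v_0\leftrightarrow v_1$ and $v_4\leftrightarrow v_5$, exactly the paper's, and the resulting $7\times 7$ matrix does yield the unit ideal with the distances from the new vertex left symbolic.

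The gap is in the case $(3,3,2)$, which you expect to be the easier of the two and propose to close with the single auxiliary vertex $u\sim v_0,v_1$ after "splitting on $d_G(u,v_2)$ and $d_G(u,v_3)$". That step fails in one sub-case. With $e=f=2$ forced as you observe, the $3$-minor ideal of the $7\times 7$ matrix $M'$ is contained in $\bigl\langle x_0,\,x_1,\,x_2+2,\,x_3+2,\,x_4,\,x_5,\,c+1,\,d+1,\,e+1,\,f+1,\,x_u+2,\,3\bigr\rangle$. This is the unit ideal when some $c,d,e,f$ equals $1$ or $3$ (since $\gcd(2,3)=\gcd(4,3)=1$), but when $u$ is adjacent to none of $v_2,v_3,v_4,v_5$ and $d_G(u,v_2)=d_G(u,v_3)=2$, every generator is either a multiple of $3$ or of the form $x_i$ or $x_i+2$, so the ideal reduces modulo $3$ to a proper ideal of $\mathbb{F}_3[X]$ and $\langle\minors_3(M')\rangle\neq\langle 1\rangle$. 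No amount of case-splitting on the values of $c$ and $d$ rescues this; one must adjoin a \emph{second} auxiliary vertex — a common neighbour $v$ of $u$ and $v_2$, which exists because $d_G(u,v_2)=2$ — and verify that the $3$-minors of the resulting $8\times 8$ submatrix $M''$ generate $\langle 1\rangle$, which is what the paper does. So your overall strategy is the right one, but you have misplaced the difficulty: it is $(3,3,2)$, not $(3,3,3)$, that requires the extra idea, and your plan as written would stall there.
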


\begin{proof}
Let $G$ be a graph having {\Gl} as induced subgraph, and let $M$ be the submatrix of $D(G,X_G)$ associated with the vertices of {\Gl} that is shown in Figure~\ref{figure:gl} and where $d_G(v_0,v_1)=y_2$, $d_G(v_0,v_4)=y_1$ and $d_G(v_1,v_5)=y_0$.

\begin{figure}[h!]
\begin{minipage}[c]{0.4\textwidth}
$
\begin{bmatrix}
    x_0 & y_2 & 2 & 2 & y_1 & 1\\
    y_2 & x_1 & 2 & 2 & 1 & y_0\\
    2 & 2 & x_2 & 1 & 1 & 1\\
    2 & 2 & 1 & x_3 & 1 & 1\\
    y_1 & 1 & 1 & 1 & x_4 & 2\\
    1 & y_0 & 1 & 1 & 2 & x_5
\end{bmatrix},
$
\end{minipage}
\begin{minipage}[c]{0.4\textwidth}
\begin{tikzpicture}[scale=1]
	\tikzstyle{every node}=[minimum width=0pt, inner sep=2pt, circle]
    \draw (60:1) node (v0) [draw] {\small 0};
	\draw (120:1) node (v1) [draw] {\small 1};
	\draw (180:1) node (v4) [draw] {\small 4};
	\draw (240:1) node (v3) [draw] {\small 3};
	\draw (300:1) node (v2) [draw] {\small 2};
    \draw (0:1) node (v5) [draw] {\small 5};
    \Edge[](v0)(v5);
    \Edge[](v1)(v4);
    \Edge[](v2)(v3);
    \Edge[](v2)(v4);
    \Edge[](v2)(v5);
    \Edge[](v3)(v4);
    \Edge[](v3)(v5);
\end{tikzpicture}
\end{minipage}
\caption{Submatrix associated with $\Gl$.}
\label{figure:gl}
\end{figure}

Let
\begin{eqnarray*}
I=\{
y_0y_1 - 2y_0 - 2y_1 + 3,
y_2 - 5
\},\\
\end{eqnarray*}
and
\begin{eqnarray*}
J=\{
x_0 + y_1 - 6,
x_1 + y_0 - 6,
x_2x_3 - 2x_2 - 2x_3 + 3,\\
x_2y_0 - x_2 - y_0 + 1,
x_2y_1 - x_2 - y_1 + 1,
3x_2 - 3,\\
x_3y_0 - x_3 - y_0 + 1,
x_3y_1 - x_3 - y_1 + 1,
3x_3 - 3,\\
x_4 + y_1 - 3,
x_5 + y_0 - 3
\}.\\
\end{eqnarray*}

It can be verified that the lexicographic Gr\"obner basis of the ideal $\langle \minors_3(M)\rangle$ is equal to $\langle I\cup J\rangle$.
Let ${\bf d}$ be a vector such that the $i$-th entry is taken over a possible values of $y_i$.
Thus $d_0,d_1\in \{2,3\}$ and $d_2\in \{2,3,4\}$.
Note that if $\gcd(I|_{\bf y = d})$ is equal to 1, then $\langle \minors_3(M)\rangle=\langle 1 \rangle$, which implies that $I_3(G,X_G)$ is trivial.
It can be verified that $\gcd(I|_{\bf y=d})$ is equal to 1 for all valid vectors, except for the following vectors:  $(3, 3, 2)$ and $(3, 3, 3)$.

Let us consider the case associated with vector $(3, 3, 3)$.
Since $d_G(v_0,v_1)=3$, then there exists a path $v_0, v, u, v_1$ such that $v\neq v_5$, since otherwise $d_G(v_5,v_1)$ would be equal to 2.
Let $M'$ be the submatrix of $D(G,X_G)$ associated with the vertices $V(\text{\Gl})\cup\{v\}$, that is
\[
M'=
\begin{bmatrix}
    x_0 & 3 & 2 & 2 & 3 & 1 & 1\\
    3 & x_1 & 2 & 2 & 1 & 3 & 2\\
    2 & 2 & x_2 & 1 & 1 & 1 & c\\
    2 & 2 & 1 & x_3 & 1 & 1 & d\\
    3 & 1 & 1 & 1 & x_4 & 2 & e\\
    1 & 3 & 1 & 1 & 2 & x_5 & f\\
    1 & 2 & c & d & e & f & x_v
\end{bmatrix},
\]
Since $\langle \minors_3(M')\rangle=\langle 1 \rangle$, then $I_3(G,X_G)$ is trivial.

Finally, let us consider the case associated with vector $(3, 3, 2)$.
Since $d_G(v_0,v_1)=2$, then there exists a vertex $u\in G$ adjacent with $v_0$ and $v_1$.
Let $M'$ be the submatrix of $D(G,X_G)$ associated with the vertices $V(\text{\Gl})\cup\{u\}$, that is
\[
M'=
\begin{bmatrix}
    x_0 & 2 & 2 & 2 & 3 & 1 & 1\\
    2 & x_1 & 2 & 2 & 1 & 3 & 1\\
    2 & 2 & x_2 & 1 & 1 & 1 & c\\
    2 & 2 & 1 & x_3 & 1 & 1 & d\\
    3 & 1 & 1 & 1 & x_4 & 2 & e\\
    1 & 3 & 1 & 1 & 2 & x_5 & f\\
    1 & 1 & c & d & e & f & x_u
\end{bmatrix},
\]
It can be seen that $\langle \minors_3(M')\rangle$ is equal to
\[
\langle
x_0, x_1, x_2 + 2, x_3 + 2, x_4, x_5, c + 1, d + 1, e + 1, f + 1, x_u + 2, 3
\rangle.
\]
From which follows that if $u$ is adjacent with $x_2$, $x_3$, $x_4$ or $x_5$, then one of $c + 1$, $d + 1$, $e + 1$ or $f + 1$ is equal to 2, and 1 would be in $I_3(G,X_G)$.
So, suppose $u$ is not adjacent with neither $x_2$, $x_3$, $x_4$ nor $x_5$.
Thus $e=f=2$.
However, $c$ and $d$ could be $2$ or $3$.
Note that if one of $c$ or $d$ is equal to 3, then 1 would be in $I_3(G,X_G)$.
Thus, assume $c$ and $d$ are equal to 2.
Therefore, there exists a vertex $v$ adjacent with $u$ and $x_2$.
Let $M''$ be the submatrix of $D(G,X_G)$ associated with the vertices $V(\text{\Gl})\cup\{u,v\}$, that is
\[
M''=
\begin{bmatrix}
    x_0 & 2 & 2 & 2 & 3 & 1 & 1 & a\\
    2 & x_1 & 2 & 2 & 1 & 3 & 1 & b\\
    2 & 2 & x_2 & 1 & 1 & 1 & 2 & 1\\
    2 & 2 & 1 & x_3 & 1 & 1 & 2 & d\\
    3 & 1 & 1 & 1 & x_4 & 2 & 2 & e\\
    1 & 3 & 1 & 1 & 2 & x_5 & 2 & f\\
    1 & 1 & 2 & 2 & 2 & 2 & x_u & 1\\
    a & b & 1 & d & e & f & 1 & x_v
\end{bmatrix},
\]
The result follows since $\langle \minors_3(M'')\rangle=\langle 1 \rangle$.
\end{proof}

\begin{lemma}
    $\Gm$ is in ${\sf Forb}_2$.
\end{lemma}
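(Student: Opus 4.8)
The plan is to argue exactly as in the preceding lemmas for forbidden graphs of diameter at most $3$. By Lemma~\ref{lemma:diameter2trivialdistance}, $\Phi(\Gm)=3$ and no proper induced subgraph of $\Gm$ attains $\Phi=3$, so it remains only to show that every graph $G$ having $\Gm$ as an induced subgraph is forbidden for $\Lambda_2$, that is, that $I_3(G,X_G)$ is trivial; together with the two facts just quoted this gives $\Gm\in{\sf Forb}_2$.

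So fix such a $G$ and label $V(\Gm)=\{v_0,\dots,v_5\}$ as in Figure~\ref{fig:for2}, with $v_0$ the vertex of degree one and $v_0v_5\in E(\Gm)$. Every non-adjacent pair of $V(\Gm)$ other than $\{v_0,v_4\}$ is joined by a path of length $2$ lying inside $\Gm$, so, since $\Gm$ is induced in $G$, its distance in $G$ equals $2$; and since $\Gm$ contains a path from $v_0$ to $v_4$ of length $3$, the only undetermined distance is $y_0:=d_G(v_0,v_4)\in\{2,3\}$. Hence the submatrix of $D(G,X_G)$ indexed by $V(\Gm)$ is
\[
M=\begin{bmatrix}
x_0 & 2 & 2 & 2 & y_0 & 1\\
2 & x_1 & 2 & 2 & 1 & 1\\
2 & 2 & x_2 & 1 & 1 & 1\\
2 & 2 & 1 & x_3 & 1 & 1\\
y_0 & 1 & 1 & 1 & x_4 & 2\\
1 & 1 & 1 & 1 & 2 & x_5
\end{bmatrix}.
\]

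Next I would exhibit two $3\times3$ minors of $M$ free of the diagonal indeterminates: $\det(M[\{v_0,v_1,v_2\},\{v_3,v_4,v_5\}])=1-y_0$ and $\det(M[\{v_0,v_3,v_4\},\{v_1,v_2,v_5\}])=-3$. Both lie in $\langle\minors_3(M)\rangle\subseteq I_3(G,X_G)$. Since $y_0\in\{2,3\}$, the first equals $-1$ or $-2$, so $\gcd(1-y_0,3)=1$, whence $1\in I_3(G,X_G)$ and $G$ is forbidden for $\Lambda_2$. (One could instead split on $y_0$: when $y_0=3$, Lemma~\ref{lemma:inducemonotone} gives $\Phi(G)\ge\Phi(\Gm)=3$ at once; when $y_0=2$, the single minor $\det(M[\{v_0,v_1,v_2\},\{v_3,v_4,v_5\}])=-1$ already suffices.)

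Unlike the arguments for $\Gh$ and $\Gl$, I do not expect any real obstacle here: $\Gm$ has a single free distance and the two minors above are a constant and a polynomial linear in $y_0$, so neither a Gr\"obner basis computation nor a vertex outside $V(\Gm)$ needs to be introduced. The only point deserving care is the combinatorial observation that $\{v_0,v_4\}$ is the unique non-adjacent pair of $\Gm$ not joined by a length-$2$ path inside $\Gm$, which is exactly why $M$ carries only one parameter.
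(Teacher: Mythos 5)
Your proof is correct and follows essentially the same route as the paper: the same one-parameter matrix $M$ with $y_0=d_G(v_0,v_4)\in\{2,3\}$, and the same two off-diagonal $3\times3$ minors (the paper records the constant minor as $3$ rather than $-3$, which is immaterial for the ideal), yielding $\langle 3,\,1-y_0\rangle=\langle1\rangle\subseteq I_3(G,X_G)$. The added remarks on minimality via Lemma~\ref{lemma:diameter2trivialdistance} and on why $y_0$ is the only free distance are consistent with how the paper handles the other diameter-bounded cases.
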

\begin{proof}
Let $G$ be a graph having $\Gm$ as induced subgraph, and let $M$ be the submatrix of $D(G,X_G)$ associated with the vertices of $\Gm$ that is shown in Figure~\ref{figure:gm} and where $d_G(v_0,v_4)=y_0$.

\begin{figure}[h!]
\begin{minipage}[c]{0.4\textwidth}
$
\begin{bmatrix}
    x_0 & 2 & 2 & 2 & y_0 & 1\\
    2 & x_1 & 2 & 2 & 1 & 1\\
    2 & 2 & x_2 & 1 & 1 & 1\\
    2 & 2 & 1 & x_3 & 1 & 1\\
    y_0 & 1 & 1 & 1 & x_4 & 2\\
    1 & 1 & 1 & 1 & 2 & x_5
\end{bmatrix},
$
\end{minipage}
\begin{minipage}[c]{0.4\textwidth}
\begin{tikzpicture}[scale=1]
	\tikzstyle{every node}=[minimum width=0pt, inner sep=2pt, circle]
    \draw (60:1) node (v0) [draw] {\small 0};
	\draw (120:1) node (v1) [draw] {\small 1};
	\draw (180:1) node (v4) [draw] {\small 4};
	\draw (240:1) node (v3) [draw] {\small 3};
	\draw (300:1) node (v2) [draw] {\small 2};
    \draw (0:1) node (v5) [draw] {\small 5};
    \Edge[](v0)(v5);
    \Edge[](v1)(v4);
    \Edge[](v1)(v5);
    \Edge[](v2)(v3);
    \Edge[](v2)(v4);
    \Edge[](v2)(v5);
    \Edge[](v3)(v4);
    \Edge[](v3)(v5);
\end{tikzpicture}
\end{minipage}
\caption{Submatrix associated with $\Gm$.}
\label{figure:gm}
\end{figure}

We have that $\det(M[\{0,3,4\},\{1,2,5\}])=3$ and $\det(M[\{0,1,2\},\{3,4,5\}])=1-y_0$.
Since $y_0\in\{2,3\}$, then $\langle 1\rangle=\langle 3, 1-y_0\rangle\subseteq I_3(G,X_G)$, from which follows that $G\in{\sf Forb}_2$.
\end{proof}

\begin{lemma}
    $\Gq$ is in ${\sf Forb}_2$.
\end{lemma}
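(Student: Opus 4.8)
The plan is to argue as in the previous lemmas: take a graph $G$ containing $\Gq$ as an induced subgraph, let $M=D(G,X_G)[V(\Gq)]$, and show that $\langle\minors_3(M)\rangle=\langle 1\rangle$; since $\langle\minors_3(M)\rangle\subseteq I_3(G,X_G)$, this yields $G\in{\sf Forb}_2$. The first thing I would do is record how little of $M$ is actually free. Writing $\Gq$ as the triangle on $\{v_0,v_1,v_6\}$ glued at $v_6$ to the $K_{2,3}$ with parts $\{v_4,v_5\}$ and $\{v_2,v_3,v_6\}$, one checks that within $\Gq$ every pair of vertices is at distance at most $2$ except the four pairs $\{v_0,v_2\}$, $\{v_0,v_3\}$, $\{v_1,v_2\}$, $\{v_1,v_3\}$, which are at distance $3$. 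Since $\Gq$ is induced in $G$, adjacencies are preserved and a pair at distance $2$ in $\Gq$ stays at distance exactly $2$ in $G$; so the only off-diagonal entries of $M$ not determined outright are $y_0=d_G(v_0,v_2)$, $y_1=d_G(v_0,v_3)$, $y_2=d_G(v_1,v_2)$, $y_3=d_G(v_1,v_3)$, each an integer in $\{2,3\}$.

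The main step is to exhibit two $3\times 3$ minors of $M$ whose values are coprime integers, in the spirit of the proofs for $\Gj$ and $\Gm$. The minor on rows $\{v_1,v_4,v_6\}$ and columns $\{v_0,v_3,v_5\}$ equals
\[
\det\begin{pmatrix}1 & y_3 & 2\\ 2 & 1 & 2\\ 1 & 2 & 1\end{pmatrix}=3
\]
(the cofactor of the entry $y_3$ in this matrix is zero), so $3\in I_3(G,X_G)$ unconditionally. The minor on rows $\{v_0,v_2,v_4\}$ and columns $\{v_1,v_3,v_5\}$ equals
\[
\det\begin{pmatrix}1 & y_1 & 2\\ y_2 & 2 & 1\\ 2 & 1 & 2\end{pmatrix}=-2(y_1-1)(y_2-1)-3,
\]
which for $y_1,y_2\in\{2,3\}$ takes the values $-5$, $-7$, $-11$, none divisible by $3$. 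Hence $\langle 1\rangle=\langle 3,\, -2(y_1-1)(y_2-1)-3\rangle\subseteq\langle\minors_3(M)\rangle\subseteq I_3(G,X_G)$, and $G\in{\sf Forb}_2$.

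The only real work here is spotting such a convenient pair of minors; once they are found, no case analysis and, in particular, no vertex‑augmentation (unlike in the lemmas for $\Gh$ and $\Gl$) is needed. If no clean pair existed, the fallback would be the systematic method: compute a lexicographic Gr\"obner basis of $\langle\minors_3(M)\rangle$ over $\mathbb{Z}[x_{v_0},\dots,x_{v_6},y_0,\dots,y_3]$, specialize $(y_0,y_1,y_2,y_3)$ over $\{2,3\}^4$ (only a few orbits survive under the automorphisms $v_0\leftrightarrow v_1$, $v_2\leftrightarrow v_3$, $v_4\leftrightarrow v_5$ of $\Gq$), and for any specialization whose specialized basis has $\gcd$ different from $1$, use that a pair at distance $3$ in $G$ forces an internal vertex $u$ of a shortest path between them, adjoin $u$ to $V(\Gq)$, and rerun the $\minors_3$ computation on the enlarged submatrix. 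I expect, however, that the two‑minor argument already finishes the proof.
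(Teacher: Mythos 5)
Your proof is correct, and it reaches the conclusion by a genuinely more elementary route than the paper. The paper's own proof of this lemma computes a lexicographic Gr\"obner basis of $\langle\minors_3(M)\rangle$ with Macaulay2, splits it into two sets $I\cup J$, observes that $J$ contains the constant $3$ together with polynomials such as $y_0y_1+2y_0+2y_1+1$ in the distance parameters only, and checks that for every admissible specialization of the $y_i$ over $\{2,3\}$ the gcd of $J$ is $1$. You instead exhibit two explicit off-diagonal $3\times 3$ minors: one on rows $\{v_1,v_4,v_6\}$ and columns $\{v_0,v_3,v_5\}$ that is identically $3$ (the cofactor of the single undetermined entry vanishes, as you note), and one on rows $\{v_0,v_2,v_4\}$ and columns $\{v_1,v_3,v_5\}$ equal to $-2(y_1-1)(y_2-1)-3$, which is never divisible by $3$ when $y_1,y_2\in\{2,3\}$; I have verified both determinants and the coprimality claim. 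This is the same style of argument the paper uses for the easier graphs $\Gg$, $\Gj$ and $\Gm$, so what you gain is a short, hand-checkable, software-free proof with no case analysis and no vertex augmentation; what the paper's Gr\"obner-basis route buys is a systematic procedure that still works for graphs (such as $\Gh$ and $\Gl$) where no such convenient pair of minors exists. Your preliminary observation that all off-diagonal entries of $M$ other than the four distance-$3$ pairs are forced to equal their values in $\Gq$ is also correct and matches the paper's setup.
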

\begin{proof}
Let $G$ be a graph having $\Gq$ as induced subgraph, and let $M$ be the submatrix of $D(G,X_G)$ associated with the vertices of $\Gq$ that is shown in Figure~\ref{figure:gq} and where $d_G(v_0,v_2)=y_3$, $d_G(v_0,v_3)=y_2$, $d_G(v_1,v_2)=y_1$ and $d_G(v_1,v_3)=y_0$.

\begin{figure}[h!]
\begin{minipage}[c]{0.4\textwidth}
$
\begin{bmatrix}
    x_0 & 1 & y_3 & y_2 & 2 & 2 & 1\\
    1 & x_1 & y_1 & y_0 & 2 & 2 & 1\\
    y_3 & y_1 & x_2 & 2 & 1 & 1 & 2\\
    y_2 & y_0 & 2 & x_3 & 1 & 1 & 2\\
    2 & 2 & 1 & 1 & x_4 & 2 & 1\\
    2 & 2 & 1 & 1 & 2 & x_5 & 1\\
    1 & 1 & 2 & 2 & 1 & 1 & x_6
\end{bmatrix},
$
\end{minipage}
\begin{minipage}[c]{0.4\textwidth}
\begin{tikzpicture}[scale=1]
	\tikzstyle{every node}=[minimum width=0pt, inner sep=2pt, circle]
    \draw (0:1) node (v0) [draw] {\small 0};
	\draw (360/7:1) node (v1) [draw] {\small 1};
	\draw (2*360/7:1) node (v4) [draw] {\small 4};
	\draw (3*360/7:1) node (v3) [draw] {\small 3};
	\draw (4*360/7:1) node (v2) [draw] {\small 2};
    \draw (5*360/7:1) node (v5) [draw] {\small 5};
    \draw (6*360/7:1) node (v6) [draw] {\small 6};
    \Edge[](v0)(v1);
    \Edge[](v0)(v6);
    \Edge[](v1)(v6);
    \Edge[](v2)(v4);
    \Edge[](v2)(v5);
    \Edge[](v3)(v4);
    \Edge[](v3)(v5);
    \Edge[](v4)(v6);
    \Edge[](v5)(v6);
\end{tikzpicture}
\end{minipage}
\caption{Submatrix associated with $\Gq$.}
\label{figure:gq}
\end{figure}

Let
\begin{eqnarray*}
I=\{
x_0 x_1 + x_0 + x_1,
x_0 y_0 + 2 x_0 + y_0 + y_2 + 1,\\
x_0 y_1 + 2 x_0 + y_1 + y_3 + 1,
x_1 y_2 + 2 x_1 + y_0 + y_2 + 1,\\
x_1 y_3 + 2 x_1 + y_1 + y_3 + 1,
x_2 + y_1 y_3 + 2 y_1 + 2 y_3 + 2,\\
x_3 + y_0 y_2 + 2 y_0 + 2 y_2 + 2,
x_4 + 1,
x_5 + 1,
x_6 + 1
\},
\end{eqnarray*}
and
\begin{eqnarray*}
J=\{
y_0 y_1 + 2 y_0 + 2 y_1 + 1,
y_0 y_3 + 2 y_0 + 2 y_3 + 1,\\
y_1 y_2 + 2 y_1 + 2 y_2 + 1,
y_2 y_3 + 2 y_2 + 2 y_3 + 1,
3
\}.
\end{eqnarray*}
It can be verified that the lexicographic Gr\"obner basis of the ideal $\langle \minors_3(M)\rangle$ is equal to $\langle I\cup J\rangle$.
Let ${\bf d}$ be a vector such that the $i$-th entry is taken over a possible values of $y_i$, that is, $d_i\in \{2,3\}$ for $i\in\{0,\dots,3\}$.
Since $\gcd(J|_{\bf y = d})=1$, for any possible vector ${\bf d}$, then $\langle \minors_3(M)\rangle=\langle 1 \rangle$.
Which implies $I_3(G,X_G)$ is trivial.
\end{proof}

Therefore by previous lemmas we have graphs in $\Lambda_2$ are $F$-free.
% \begin{proposition}
%     Graphs in $\Lambda_2$ are $F$-free.
% \end{proposition}
Now, we are going to prove that graphs in $\Lambda_2$ are {\sf odd-holes}-free.

\begin{lemma}\label{lemma:oddholes forbidden}
    Let $n\geq 3$.
    Then, $\Phi(C_{2n+1})\geq 3$ and no connected induced subgraph $H$ of $G$ has $\Phi(H)=3$.
\end{lemma}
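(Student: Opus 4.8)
The statement has two halves: (i) $\Phi(C_{2n+1})\ge 3$ for every $n\ge 3$, i.e.\ the odd cycle of length $\ge 7$ has trivial third distance ideal; and (ii) no connected \emph{proper} induced subgraph $H$ of $C_{2n+1}$ satisfies $\Phi(H)=3$. (The phrase ``induced subgraph $H$ of $G$'' is surely a typo for ``induced subgraph $H$ of $C_{2n+1}$''.) For (ii) observe that a connected induced subgraph of a cycle is just a path $P_m$ with $1\le m\le 2n+1$, or the whole cycle; the proper connected induced subgraphs are exactly the paths $P_m$ with $m\le 2n$. By \cite[Theorem 3]{HW} the distance matrix of a tree has exactly two invariant factors equal to $1$, so $\phi(P_m)=2$, and Corollary~\ref{coro:eval1} gives $\Phi(P_m)\le\phi(P_m)=2<3$. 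That settles the ``no proper induced subgraph'' part immediately; the real content is (i).

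For (i) the approach is a direct computation with the generalized distance matrix. Label $V(C_{2n+1})=\{0,1,\dots,2n\}$ cyclically and let $D=D(C_{2n+1})$, so $D_{ij}$ is the cyclic distance $\min(|i-j|,2n+1-|i-j|)\le n$. I want to exhibit a $3\times 3$ submatrix of $D(C_{2n+1},X)=\diag(X)+D$ whose determinant is a unit in $\mathbb Z[X]$, or more robustly a collection of $3\times 3$ minors whose gcd (as polynomials, hence their ideal) is $\langle 1\rangle$. The clean move is to kill all the indeterminates by choosing three \emph{distinct} rows and three \emph{distinct} columns that are disjoint as index sets, so the chosen submatrix involves no diagonal entry and is a genuine integer matrix. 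Concretely, pick rows $\{0,1,2\}$ and columns $\{k,k+1,k+2\}$ for a suitable offset $k$; the resulting $3\times3$ integer matrix has entries that are small, explicitly computable distances, and one computes its determinant as an explicit integer (or small polynomial in $n$ once $n$ is large enough that all the relevant distances equal their ``linear'' values). Choosing two such offsets $k$ and $k'$ produces two integer determinants $a$ and $b$; if $\gcd(a,b)=1$ we are done since $\langle a,b\rangle\subseteq I_3(C_{2n+1},X)$. For small $n$ (that is, $n=3$, so $C_7$, and perhaps $n=4,5$) where the ``wraparound'' makes the generic formula fail, one checks the finitely many cases by hand or cites Lemma~\ref{lemma:diameter2trivialdistance}/\cite{macaulay2}; indeed $C_7=\Gp$ already appears among the listed forbidden graphs, so $\Phi(C_7)=3$ is known.

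The key arithmetic step is thus: compute, for $n$ large, $\det\bigl(D[\{0,1,2\},\{k,k+1,k+2\}]\bigr)$ for two well-chosen values of $k$ and verify the two values are coprime. A natural first try is an offset near $n$, where the distance function has a ``corner'' — e.g.\ rows $0,1,2$ against columns $n-1,n,n+1$, where distances jump from increasing to decreasing — because corners are what make distance-matrix minors nonzero; a second offset a few steps away gives the companion integer. I expect the determinants to come out as small linear polynomials in $n$ (for instance something like $2n-c$ and $2n-c'$ with $c-c'$ odd, forcing coprimality for all $n$, or a constant like $\pm3$ as happens repeatedly in the earlier lemmas). The main obstacle is bookkeeping: writing the cyclic distances piecewise and being careful about exactly which small $n$ fall outside the generic regime, then dispatching those finitely many exceptional cycles separately (via the Macaulay2 check already invoked in Lemma~\ref{lemma:diameter2trivialdistance}, noting $C_7$ is on the list). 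Once the generic $3\times3$ integer minors are pinned down and shown coprime, Lemma~\ref{lemma:inducemonotone} (the submatrix lies in $D(C_{2n+1},X)$ itself, so monotonicity is trivial here) gives $1\in I_3(C_{2n+1},X)$, hence $\Phi(C_{2n+1})\ge 3$, completing the proof.
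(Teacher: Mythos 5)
Your proposal follows essentially the same route as the paper: part (ii) via \cite[Theorem 3]{HW} and Corollary~\ref{coro:eval1} is identical, and for part (i) the paper uses exactly the submatrix you single out, $D(C_{2n+1},X)[\{0,1,2\},\{n-1,n,n+1\}]$, whose determinant is the constant $-1$ for $n\ge 4$ (so a single minor suffices, no gcd of two offsets needed), with $C_7$ handled separately by two integer minors with determinants $2$ and $5$. The only thing missing from your write-up is actually evaluating that determinant, but your choice of submatrix does work.
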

\begin{proof}
    When $n= 3$, we have
    \[
    \det(D(C_7,X_{C_7})[\{0,1,2\},\{4,5,6\}])=2
    \]
    and
    \[
    \det(D(C_7,X_{C_7})[\{1,2,4\},\{3,5,6\}])=5.
    \]
    Since $\gcd(2,5)=1$, then $I(C_7,X_{C_{7}})$ is trivial.
    When $n\geq 4$, we have the following. Let $C_{2n+1}$ be the cycle with vertex set $V(C_{2n})=\{v_i \; : \; i\in\{0,\dots,2n+1\}\}$ and edge set $E(C_{2n+1})=\{v_{i-1} v_{i} \; : \; i\in[2n]\}\cup\{v_{2n},v_0\}$.
    Consider the submatrix $D(C_{2n+1},X_{C_{2n+1}})[\{0,1,2\},\{n-1,n,n+1\}]$:
    \[
    M=
    \begin{bmatrix}
     n-1 & n & n\\
     n-2 & n-1 & n\\
     n-3 & n-2 & n-1\\
    \end{bmatrix}
    \]
    Since $\det(M)=-1$, it follows that $I(C_{2n+1},X_{C_{2n+1}})$ is trivial.
    Finally, any connected induced subgraph of $C_{2n+1}$ is a path.
    In \cite[Theorem 3]{HW}, it was proved that the third invariant factor of the Smith normal form of the distance matrix of a tree is 2.
    Therefore by Corollary~\ref{coro:eval1}, $\Phi(P_k)\leq 2$.
\end{proof}

\begin{lemma}
    For $n\geq 3$, $C_{2n+1}$ is in ${\sf Forb}_2$.
\end{lemma}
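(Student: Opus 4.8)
The plan is to verify the two things that membership in ${\sf Forb}_2$ requires: that no proper connected induced subgraph of $C_{2n+1}$ is forbidden for $\Lambda_2$, and --- the real content --- that \emph{every} graph $G$ having $C_{2n+1}$ as an induced subgraph satisfies $\Phi(G)\geq 3$. The first point is immediate from what precedes: by Lemma~\ref{lemma:oddholes forbidden}, $\Phi(C_{2n+1})\geq 3$, while every proper connected induced subgraph of $C_{2n+1}$ is a path, and $\Phi(P_k)\leq 2$ by Corollary~\ref{coro:eval1} together with \cite[Theorem~3]{HW}. So the whole argument goes into the second point, which I would prove by strong induction on $n$, the base case $n=3$ being the inductive step below with the ``shorter odd hole'' branch vacuous.

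For the inductive step, let $G$ contain $C:=C_{2n+1}$ as an induced subgraph. The dichotomy is whether $C$ is isometric in $G$. If $d_G(u,v)=d_C(u,v)$ for all $u,v\in V(C)$, then the hypothesis of Lemma~\ref{lemma:inducemonotone} holds, since a shortest cycle arc realizes every such distance and lies inside $C$; hence $\Phi(G)\geq\Phi(C)\geq 3$. If $C$ is not isometric, pick $v_a,v_b\in V(C)$ with $d:=d_G(v_a,v_b)<d_C(v_a,v_b)=:k$ and $d$ minimum, and let $P$ be a shortest $v_a$--$v_b$ path in $G$. Minimality of $d$ forces $P$ to be internally disjoint from $C$, and since $C$ is chordless, $d\geq 2$ and $k\geq 3$. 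Now close $P$ with the cycle arc of $C$ between $v_a$ and $v_b$ whose length has parity opposite to $d$; this yields a cycle $W\subseteq G$ of odd length $\ell$, and a short computation gives $5\leq\ell\leq 2n-1$. A shortest odd cycle $C'$ of the induced subgraph $G[V(W)]$ is then chordless (a chord would split off a strictly shorter odd cycle), hence an induced odd cycle of $G$ with $3\leq|C'|\leq\ell<2n+1$.

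If $|C'|\geq 7$, then $C'\cong C_{2m+1}$ with $3\leq m<n$, so $C'\in{\sf Forb}_2$ by the induction hypothesis, and since $C'$ is an induced subgraph of $G$ this already forces $\Phi(G)\geq 3$. The remaining possibility is $|C'|\in\{3,5\}$ --- the shortcut manufactures only small cycles, and $C_3,C_4,C_5,C_6$ are not themselves forbidden for $\Lambda_2$. Here I would run a bounded case analysis: a minimal shortcut producing only small cycles is, up to the chords of $W$, a single vertex $u\notin V(C)$ adjacent to two cycle vertices three apart, and according to $N(u)\cap V(C)$ the vertex $u$ together with a window of four or five consecutive cycle vertices induces $\Gg$, $\Gc$ or $\Ga$ when $u$ has few further cycle neighbours, and $\Gd$ or $\Gb$ (a diamond with a pendant) when $u$ has more; in each subcase the induced subgraph lies in ${\sf Forb}_2$ by Lemmas~\ref{lemma:bullisforbidden} and \ref{Lemma:5-panforbidden} and Proposition~\ref{proposition:forbiddensimplecases}, so $\Phi(G)\geq 3$. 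Since only finitely many local configurations arise, this can also be checked with the Macaulay2 code of \cite[Appendix]{at}.

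The main obstacle is precisely this last step: proving that a shortcut which on its own produces only $C_3,C_4,C_5,C_6$ nevertheless always drags in an induced member of $F$, uniformly over the way the shortcut and the extra edges of $G$ incident to its internal vertices attach to $C$. The isometric case, the reduction to a shorter induced odd cycle, and the induction itself are all routine; it is controlling this ``short shortcut'' case, which is the only place where the explicit list $F$ is needed, that will take the work.
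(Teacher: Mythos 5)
Your overall skeleton matches the paper's: split on whether $C=C_{2n+1}$ is isometric in $G$, dispose of the isometric case via Lemmas~\ref{lemma:oddholes forbidden} and \ref{lemma:inducemonotone}, and otherwise use a minimal shortcut to manufacture either a shorter induced odd hole or a member of $F$. Your handling of the ``long'' branch is actually cleaner than the paper's: closing $P$ with the arc of opposite parity and passing to a shortest (hence chordless) odd cycle of $G[V(W)]$ lets you invoke the induction hypothesis directly whenever that cycle has length at least $7$, whereas the paper reaches the analogous point only after an iterated analysis of the halves $C^1,C^2$ and the sub-cycles $C^3,C^4$.

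The gap is the branch you yourself flag, and it is not a routine finite check. Your structural claim that a shortcut producing only odd cycles of length $3$ or $5$ reduces, up to chords of $W$, to a single external vertex adjacent to two cycle vertices three apart is false as stated: the shortest odd cycle of $G[V(W)]$ can be a triangle $u\,v_i\,v_{i+1}$, or a $C_5$ using two or three internal vertices of $P$, and nothing you have proved controls how many neighbours an internal vertex of $P$ has on $C$. This is precisely what the paper spends Claims~\ref{claim:Lemmaunomenosuno:ciclos}--\ref{claim:siwvkwalomasuno} establishing: a vertex outside $C$ adjacent to $v_k$ cannot be adjacent to $v_{k\pm1}$ (else an induced \Gd, \Gl\ or \Ga\ appears), nor to $v_{k\pm3}$ (else a \Gg), nor to $v_{k\pm l}$ for larger odd $l$ (else a shorter odd hole), and since the cycle is odd this forbids every other vertex of $C$, so each external vertex meets $C$ in at most one vertex. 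Your proposed list of local configurations is also off in detail: a vertex adjacent to exactly three consecutive cycle vertices yields neither a \Gc\ nor a \Gb\ but the six-vertex graph \Gl, and a diamond or a diamond-with-pendant is not forbidden for $\Lambda_2$ at all. Until this at-most-one-neighbour claim (or an equivalent local analysis) is actually carried out, the small-cycle case --- which is where essentially all of the paper's work lies --- remains open in your argument.
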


\begin{proof}
It only remains to prove that for $n\geq3$ if $G$ is a graph having $C_{2n+1}$ as induced subgraph, then $I_3(G,X_G)$ is trivial.
First note that case $n=3$ is true.

\begin{claim}\label{claim:LemmaInicial:ciclos}
    $\Gp$ is minimal in ${\sf Forb}_2$.
\end{claim}
\begin{proof}
Let $G$ be a graph having $\Gp$ as induced subgraph, and let $M$ be the submatrix of $D(G,X_G)$ associated with the vertices of $\Gp$, that is

\begin{minipage}[c]{0.5\textwidth}
$
\begin{bmatrix}
    x_0 & y_6 & y_5 & 2 & 2 & 1 & 1\\
    y_6 & x_1 & 1 & 2 & 1 & y_4 & 2\\
    y_5 & 1 & x_2 & 1 & 2 & 2 & y_3\\
    2 & 2 & 1 & x_3 & y_2 & 1 & y_1\\
    2 & 1 & 2 & y_2 & x_4 & y_0  & 1\\
    1 & y_4 & 2 & 1 & y_0 & x_5 & 2\\
    1 & 2 & y_3 & y_1 & 1 & 2 & x_6
\end{bmatrix},
$
\end{minipage}
\begin{minipage}[c]{0.5\textwidth}
\begin{tikzpicture}[scale=1]
	\tikzstyle{every node}=[minimum width=0pt, inner sep=2pt, circle]
    \draw (0:1) node (v0) [draw] {\small 0};
	\draw (360/7:1) node (v1) [draw] {\small 5};
	\draw (2*360/7:1) node (v2) [draw] {\small 3};
	\draw (3*360/7:1) node (v3) [draw] {\small 2};
	\draw (4*360/7:1) node (v4) [draw] {\small 1};
    \draw (5*360/7:1) node (v5) [draw] {\small 4};
    \draw (6*360/7:1) node (v6) [draw] {\small 6};
    \Edge[](v0)(v1);
    \Edge[](v1)(v2);
    \Edge[](v2)(v3);
    \Edge[](v3)(v4);
    \Edge[](v4)(v5);
    \Edge[](v6)(v5);
    \Edge[](v0)(v6);
\end{tikzpicture}
\end{minipage}

where $d_G(v_0,v_1)=y_6$, $d_G(v_0,v_2)=y_5$, $d_G(v_1,v_5)=y_4$, $d_G(v_2,v_6)=y_3$, $d_G(v_3,v_4)=y_2$, $d_G(v_3,v_6)=y_1$ and $d_G(v_4,v_5)=y_0$.
We have that $\det(M[\{3,4,5\},\{0,1,2\}]) = 3-2y_4$ and $\det(M[\{4,5,6\},\{0,1,2\}]) = 2y_3 y_4-y_3-2y_4-2$.
Since $y_3,y_4\in\{2,3\}$, then $\langle 1\rangle=\langle 3-2y_4, 2y_3 y_4-y_3-2y_4-2\rangle\subseteq I_3(G,X_G)$, from which follows that $G\in{\sf Forb}_2$.
\end{proof}

We are going to proced by contraposition.
Assume $n\geq 4$ is the first odd positive integer where there exists a graph $G$ with $C_{2n+1}$ as induced subgraph such that $I_3(G,X_G)$ is not trivial.
Thus, for $k\in\{7,\dots, 2n-1\}$, if a graph $H$ has $C_k$ as induced subgraph, then $I_3(H,X_G)$ is trivial.

For simplicity we are going to denote $C_{2n+1}$ as $C$.
There are two cases.
\begin{enumerate}
\item when $d_C(u,v)=d_G(u,v)$ for each pair $u,v\in V(C)$, and
\item there is at least a pair $u,v\in V(C)$ such that $d_G(u,v)<d_C(u,v)$.
\end{enumerate}

In case (1), it follows by Lemma~\ref{lemma:oddholes forbidden} that $I_3(C,X_C)$ is trivial, and then by Lemma~\ref{lemma:inducemonotone}, $I_3(G,X_G)$ is also trivial; which is not possible.

For case (2), take $u,v\in V(C)$ such that $d_G(u,v)<d_C(u,v)$, and $d_G(u,v)$ is minimum over all pairs in $C$.
Let $v_0, \dots, v_{2n+1}$ denote the vertices in $V(C)$ such that $v_k$ is adjacent only with $v_{k-1}$ and $v_{k+1}$.
We abuse of notation to refer $v_k$ to $v_{k\mod 2n+1}$.

\begin{claim}\label{claim:Lemmaunomenosuno:ciclos}
  If $w\in V(G)\setminus V(C)$ is adjacent with $v_k\in V(C)$, then $w$ is not adjacent with neither $v_{k-1}$ nor $v_{k+1}$.
\end{claim}
\begin{proof}
Suppose $w$ is adjacent with $v_{k+1}$.
There are two cases: either $w$ is adjacent with $v_{k+2}$ or not.
Consider the first case.
Vertex $w$ cannot be adjacent with $v_{k-1}$ nor $v_{k+3}$, because otherwise the vertices $w,v_k, v_{k+1}, v_{k+2}, v_{k+3}$ or the vertices $w, v_{k-1}, v_k, v_{k+1}, v_{k+2}$ would induce a \Gd, and by Proposition~\ref{proposition:forbiddensimplecases}, $I_3(G,X_G)$ would be trivial.
Therefore, the vertices $w, v_{k-1}, v_k, v_{k+1}, v_{k+2}, v_{k+3}$ will induce a \Gl.
Thus by Lemma~\ref{lemma:forbidden_co-twin-house}, $I_3(G,X_G)$ would be trivial, which is impossible.
Now consider the case when $w$ is not adjacent with $v_{k+2}$.
There are two possible scenarios: either $w$ is adjacent with $v_{k-1}$ or not.
In the first one, we also have to consider whether $w$ is adjacent with $v_{k-2}$ or not.
If $w$ is adjacent with $v_{k-2}$, then the vertices $w, v_{k-2}, v_{k-1}, v_k, v_{k+1}$ induce a \Gd, which is not possible.
If $w$ is not adjacent with $v_{k-2}$, then the vertices $w, v_{k-2}, v_{k-1}, v_k, v_{k+1}$ will induce a \Gl.
Therefore, $w$ is not adjacent with $v_{k-1}$.
And thus, the vertices $w, v_{k-1}, v_k, v_{k+1}, v_{k+2}$ induce a \Ga; which is not possible since Lemma~\ref{lemma:bullisforbidden} implies $I_3(G,X_G)$ must be trivial.
Then, $w$ is not adjacent with $v_{k+1}$.
Analogously, we can obtain that $w$ is not adjacent with $v_{k-1}$.
\end{proof}

\begin{claim}\label{claim:Lemmatresmenostres:ciclos}
  If $w\in V(G)\setminus V(C)$ is adjacent with $v_k\in V(C)$, then $w$ is not adjacent with neither $v_{k-3}$ nor $v_{k+3}$.
\end{claim}
\begin{proof}
Suppose $w$ is adjacent with $v_{k+3}$.
By Claim~\ref{claim:Lemmaunomenosuno:ciclos}, $w$ is not adjacent with $v_{k+1}$, $v_{k+2}$ nor $v_{k+4}$.
Then, the vertices $w, v_k, v_{k+1}, v_{k+2}, v_{k+3}, v_{k+4}$ induce a \Gg.
Thus by Lemma~\ref{Lemma:5-panforbidden}, $I_3(G,X_G)$ is  trivial, which is impossible.
\end{proof}

Next claim follows since $n$ is the first integer greater or equal than 4 for which there exists a graph $G$ with $C_{2n+1}$ as induced subgraph such that $I_3(G,X_G)$ is not trivial.

\begin{claim}\label{claim:Lemmaimparesmenos:ciclos}
If $w\in V(G)\setminus V(C)$ is adjacent with $v_k\in V(C)$, then $w$ is not adjacent with $v_{k-l}$ nor $v_{k+l}$, where $l\in\{5,7,\dots, 2n-1\}$.
\end{claim}
\begin{proof}
Assume $w$ is adjacent with $v_{k+5}$.
By applying Claims \ref{claim:Lemmaunomenosuno:ciclos} and \ref{claim:Lemmatresmenostres:ciclos} to $v_k$ and $v_{k+5}$, we have $w$ is not adjacent with $v_{k+1}$, $v_{k+2}$, $v_{k+3}$ and  $v_{k+4}$.
Then $w$, $v_k$, $v_{k+1}$, $v_{k+2}$, $v_{k+3}$, $v_{k+4}$, $v_{k+5}$ induce a $C_7$ which has at least 3 trivial distance ideals.
That is not possible.
The results follows by applying recursively the argument.
\end{proof}

Claims \ref{claim:Lemmaunomenosuno:ciclos}, \ref{claim:Lemmatresmenostres:ciclos} and \ref{claim:Lemmaimparesmenos:ciclos} imply that if a vertex $w\in V(G)\setminus V(C)$ is adjacent with $v_k\in V(C)$, then $w$ cannot be adjacent with $v_{k-1}$, $v_{k-3}$, \dots, $v_{-k+1}$ neither with $v_{k+1}$, $v_{k+3}$, \dots, $v_{3k-1}$.
That is not other thing that next result.

\begin{claim}\label{claim:siwvkwalomasuno}
If $w\in V(G)\setminus V(C)$ is adjacent with $v_k\in V(C)$, then $w$ is not adjacent with any other vertex in $V(C)\setminus\{v_k\}$.
\end{claim}

On the other hand, since $u$ and $v$ are two vertices in $V(C)$ such that $d_G(u,v) < d_C(u,v)$, then there exists a path $P=u, u_1, \dots, u_{l-1},v$ of minimum length $l=d_G(u,v)$.
We might assume $u_k\notin V(C)$ for each $k\in\{1,\dots,l-1\}$.
For simplicity, we also relabel the vertices of $C$ such that $u=v_0$ and $v=v_m$.
%{\color{red} Quizas no sea lo necesario lo siguiente.}
%We also might assume that $m\leq n$.
%Since $d_G(v_0,v_m)<d_C(v_0,v_m)$, then $l<m$.
By Claim~\ref{claim:siwvkwalomasuno}, for $k\in\{1,\dots,l-1\}$, vertex $u_k$ is adjacent with at most one vertex in $V(C)$.
In particular, the length $l$ cannot be 2.
See Figure~\ref{figure:CP}.

\begin{figure}
\begin{tikzpicture}[scale=1]
	\tikzstyle{every node}=[minimum width=0pt, inner sep=2pt, circle]
    \draw (140:2.8) node (v_1) [draw,fill=white] {\tiny $v_{-1}$};
    \draw (160:3) node (v0) [draw,fill=white] {\tiny $v_0$};
    \draw (180:2.8) node (v1) [draw,fill=white] {\tiny $v_1$};
    
    \draw (40:2.8) node (vm1) [draw,fill=white] {\tiny $v_{m+1}$};
    \draw (20:3) node (vm) [draw,fill=white] {\tiny $v_m$};
    \draw (0:2.8) node (vm_1) [draw,fill=white] {\tiny $v_{m-1}$};
    
    \draw (-2,0.5) node (u1) [draw,fill=white] {\tiny $u_{1}$};
    \draw (0,0.5) node (uk) [draw,fill=white] {\tiny $u_{k}$};
    \draw (1.8,0.5) node (ul) [draw,fill=white] {\tiny $u_{l-1}$};
    
    \draw (-1,-1.5) node (vj_1) [draw,fill=white] {\tiny $v_{j-1}$};
    \draw (0,-1.5) node (vj) [draw,fill=white] {\tiny $v_{j}$};
    \draw (1,-1.5) node (vj1) [draw,fill=white] {\tiny $v_{j+1}$};
    
    \draw (0,-.5) node (C1) [] {$C^1$};
    \draw (0,1.3) node (C2) [] {$C^2$};
    
    \draw (v_1) -- (v0) -- (v1);
    \draw (vm_1) -- (vm) -- (vm1);
    \draw (vj_1) -- (vj) -- (vj1);
    
    \draw[dotted] (v1) edge[bend right] (vj_1);
    \draw[dotted] (vm_1) edge[bend left] (vj1);
    \draw[dotted] (u1) -- (uk) -- (ul);
    \draw[dotted] (vm1) edge (v_1);
    
    \draw (v0) -- (u1);
    \draw (ul) -- (vm);
\end{tikzpicture}
\caption{A drawing of the induced subgraph by $C$ and $P$.}
\label{figure:CP}
\end{figure}

Let $C^1$ be the induced subgraph obtained by the vertices $\{v_0, v_1, \dots, v_{m-1}, v_m\}\cup V(P)$, and let $C^2$ be the induced subgraph obteined by $\{v_0, v_{-1}, \dots, v_{m+1}, v_m\}\cup V(P)$.
Note that either $C^1$ or $C^2$ has an odd number of vertices.
Let us assume without loss of generality $C^1$ is the side with an odd number of vertices.
In $C^1$ we want to find an induced cycle with an odd number of vertices.
In fact, if $C^1$ is already an induced cycle of odd length, then this cannot be of length 3 or 5.
Otherwise $P= v_0 u_1 v_m$ consists of three vertices, this is because $P$ have less vertices than the path $v_0, v_1, \dots, v_m$.
Which is impossible since by Claim~\ref{claim:siwvkwalomasuno}, every vertex in $V(P)\setminus\{v_0,v_m\}$ is adjacent with at most one vertex in $C$.
Therefore $C^1$ is an odd induced cycle of length less than $2n+1$ and greater or equal than $7$.
Which is also impossible, because otherwise $I_3(G,X_G)$ is  trivial.

So, suppose there is at least one vertex in $V(P)\setminus\{v_0,v_m\}$ that is adjacent with a vertex in $P'= v_1 v_2 \cdots v_{m-1}$.
Let $u_k\in P$ be the first vertex from $v_0$ that is adjacent with a vertex $v_j\in P'$.
Note that $u_k$ cannot be adjacent with a vertex in $V(P)\setminus\{u_{k-1},u_k, u_{k+1}\}$, otherwise $P$ would not be of minimum length.
Also we have the next result which can be proved analogously to Claim~\ref{claim:Lemmaunomenosuno:ciclos}.
\begin{claim}\label{claim:VinCadjacentwithP}
  Let $v_j\in V(C^1)\setminus V(P)$.
  If $v_j$ is adjacent with $u_k\in V(P)$, then $v_j$ is not adjacent with $u_{k-1}$ nor $u_{k+1}$.
\end{claim}

Thus, Claims \ref{claim:Lemmatresmenostres:ciclos} and \ref{claim:VinCadjacentwithP} imply that $C^1$ does not contain a $C_3$ as induced subgraph.

The edge $u_k v_j$ divides $C^1$ in two induced subgraphs: an induced cycle $C^3$ with vertex set $\{ v_0, v_1, \dots, v_j, u_k, u_{k-1}, \dots, u_1\}$, and the induced subgraph obtained by the vertices $u_k, v_j, v_{j+1}, \dots, v_m, u_{l-1}, \dots, u_{k+1}$.
One of these induced subgraphs have an odd number of vertices.

If the cycle is the induced subgraph of odd length, then we are almost done.
This cycle $C^3$ cannot have length 3 or greater than 7, since otherwise $G$ would have three trivial distance ideals.
We only need to consider when $C^3$ has length 5.
But there are only two possibilities: either $V(C^3)$ is equal to $v_0, v_1, u_3, u_2, u_1$ or $v_0, v_1, v_2, u_2, u_1$.
However, in both cases the vertices $v_0, v_1, u_3, u_2, u_1, u_4$ and  $v_{-1}, v_0, v_1, v_2, u_2, u_1$ would induce a \Gg.
Which is not possible since otherwise $G$ would have three trivial distance ideals.
Then the vertices $u_k, v_j, v_{j+1}, \dots, v_m, u_{l-1}, \dots, u_{k+1}$ induce a subgraph with an odd number of vertices.

Assume that, and take the first vertex, say $u_r$, in $P$ from $u_k$ that is adjacent with a vertex, say $v_{s}$, in $P'\setminus\{v_0, v_1, \dots, v_{j-1}\}$.
If such vertex does not exist we are done with a similar argument than in previous paragraph.
Therefore assume, $v_{s}$ is adjacent with $u_{r}$ such that $r>k$ and $s\geq j$.
Again from the edge $u_r v_s$ we obtain two induced subgraphs where one has odd length: let $C^4$ be the subgraph induced cycle by the vertices $u_k, u_{k+1}, \dots, u_r, v_s, v_{s-1}, \dots, v_j$, and the other subgraph is the induced by $u_r, v_s, v_{s+1}, \dots, v_m, u_{l-1}, \dots, u_{r+1}$.
If $C^4$ has odd length, again we only need to consider when it has length 5.
There are three possible cases: either $r=k+1$, $r=k+2$ or $r=k+3$.
See Figure~\ref{figure:CasesToConsider}.

\begin{figure}[h!]
\begin{tabular}{ccc}
\begin{tikzpicture}[scale=0.8]
	\tikzstyle{every node}=[minimum width=0pt, inner sep=2pt, circle]
    \draw (-0.5,0) node (v0) [draw,fill=white,label=above:{\tiny $u_k$}] {};
    \draw (0.5,0) node (v1) [draw,fill=white,label=above:{\tiny $u_{k+1}$}] {};
    \draw (-1,-1) node (v2) [draw,fill=white,label=below:{\tiny $v_j$}] {};
    \draw (0,-1) node (v3) [draw,fill=white,label=below:{\tiny $v_{j+1}$}] {};
    \draw (1,-1) node (v4) [draw,fill=white,label=below:{\tiny $v_{j+2}$}] {};
    %\draw (-2,-1) node (v5) [draw,fill=white,label=below:{\tiny $v_{j-1}$}] {};
    \draw (v2) -- (v0) -- (v1) -- (v4) -- (v3) -- (v2);% -- (v5);
    \draw[dotted] (v0) edge (-1,0);
    \draw[dotted] (v1) edge (1,0);
    \draw[dotted] (v2) edge (-1.5,-1);
    \draw[dotted] (v4) edge (1.5,-1);
\end{tikzpicture}
&
\begin{tikzpicture}[scale=0.8]
	\tikzstyle{every node}=[minimum width=0pt, inner sep=2pt, circle]
    \draw (-1,0) node (v0) [draw,fill=white,label=above:{\tiny $u_k$}] {};
    \draw (0,0) node (v1) [draw,fill=white,label=above:{\tiny $u_{k+1}$}] {};
    \draw (-0.5,-1) node (v2) [draw,fill=white,label=below:{\tiny $v_j$}] {};
    \draw (0.5,-1) node (v3) [draw,fill=white,label=below:{\tiny $v_{j+1}$}] {};
    \draw (1,0) node (v4) [draw,fill=white,label=above:{\tiny $u_{k+2}$}] {};
    %\draw (-2,0) node (v5) [draw,fill=white,label=above:{\tiny $u_{k-1}$}] {};
    \draw (v0) -- (v1) -- (v4) -- (v3) -- (v2) -- (v0);
    \draw[dotted] (v0) edge (-1.5,0);
    \draw[dotted] (v4) edge (1.5,0);
    \draw[dotted] (v2) edge (-1,-1);
    \draw[dotted] (v3) edge (1,-1);
\end{tikzpicture}
&
\begin{tikzpicture}[scale=0.8]
	\tikzstyle{every node}=[minimum width=0pt, inner sep=2pt, circle]
    \draw (-1,0) node (v0) [draw,fill=white,label=above:{\tiny $u_k$}] {};
    \draw (0,0) node (v1) [draw,fill=white,label=above:{\tiny $u_{k+1}$}] {};
    \draw (0.5,-1) node (v2) [draw,fill=white,label=below:{\tiny $v_j$}] {};
    \draw (2,0) node (v3) [draw,fill=white,label=above:{\tiny $u_{k+3}$}] {};
    \draw (1,0) node (v4) [draw,fill=white,label=above:{\tiny $u_{k+2}$}] {};
    %\draw (-2,0) node (v5) [draw,fill=white,label=above:{\tiny $u_{k-1}$}] {};
    \draw (v0) -- (v1) -- (v4) -- (v3) -- (v2) -- (v0);
    \draw[dotted] (v0) edge (-1.5,0);
    \draw[dotted] (v3) edge (2.5,0);
    \draw[dotted] (v2) edge (-0.5,-1);
    \draw[dotted] (v2) edge (1.5,-1);
\end{tikzpicture}
\\
Case $r=k+1$ & Case $r=k+2$ & Case $r=k+3$\\
\end{tabular}
\caption{Cases when $C^4$ contains 5 vertices.}
\label{figure:CasesToConsider}
\end{figure}
In case $r=k+1$, $u_k$ is different from $v_0$ since $u_k$ is already adjacent with $v_j$.
Consider $u_{k-1}$.
By Claim~\ref{claim:VinCadjacentwithP}, $v_j$ is not adjacent with $u_{k-1}$.
However, it is possible that at most one of the vertices $v_{j+1}$ or $v_{j+1}$ is adjacent with $u_{k-1}$.
In any of the three possibilities we would obtain a $\Gg$ or $\Gk$ as induced subgraph og $G$, which is impossible.

Similarly, in cases $r=k+2$ and $r=k+3$, by considering vertex $v_{j-1}$, we will obtain that $C^4$ and $v_{j-1}$ induce a $\Gg$ or $\Gk$.
Which is impossible.
Therefore, $C^4$ is of even length.

Continuing inductively with the next vertex in $P'$ after $v_r$ that is adjacent with a vertex $v_t$ with $t\geq s$, we will obtain that there is an odd cycle in $C^1$ of length greater or equal than 5.
And we will get a contradiction.
\end{proof}

Thus by previous lemmas we have our main result.
\begin{theorem}
    Graphs in $\Lambda_2$ are $\{F,\text{\sf odd-holes}\}$-free.
\end{theorem}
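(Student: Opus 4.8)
The plan is to prove the contrapositive and let the preceding lemmas carry the load. By the definitions of this section, a connected graph $G$ fails to lie in $\Lambda_2$ exactly when $I_3(G,X_G)=\langle 1\rangle$, and if a graph $H$ belongs to ${\sf Forb}_2$ then \emph{every} graph that contains $H$ as an induced subgraph is forbidden for $\Lambda_2$. Thus the whole argument reduces to three steps: (i) show that each of the seventeen graphs of $F$ lies in ${\sf Forb}_2$; (ii) show that every {\sf odd-hole} lies in ${\sf Forb}_2$; and (iii) deduce, from the transfer property just quoted, that no graph in $\Lambda_2$ can contain any of these as an induced subgraph.

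For step (i) I would simply assemble the results proved earlier in the section. The graphs \Gb, \Gc, \Gd, \Ge, $\Gi$, $\Gk$, $\Gn$, $\Go$ are in ${\sf Forb}_2$ by Proposition~\ref{proposition:forbiddensimplecases}; \Ga\ by Lemma~\ref{lemma:bullisforbidden}; \Gg\ by Lemma~\ref{Lemma:5-panforbidden}; \Gl\ by Lemma~\ref{lemma:forbidden_co-twin-house}; $\Gf$, $\Gh$, $\Gj$, $\Gm$, $\Gq$ by the individual lemmas preceding the theorem; and $\Gp=C_7$ is minimal in ${\sf Forb}_2$ by Claim~\ref{claim:LemmaInicial:ciclos}. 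Together these seventeen graphs are exactly $F$. Step (ii) is the content of the lemma stating that $C_{2n+1}\in{\sf Forb}_2$ for every $n\geq 3$, i.e.\ for the whole family of odd-holes.

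For step (iii): if some $G\in\Lambda_2$ had an induced copy of a graph $H\in F$ or of an odd-hole, then, $H$ being in ${\sf Forb}_2$, the graph $G$ would itself be forbidden for $\Lambda_2$, that is $I_3(G,X_G)=\langle 1\rangle$, contradicting $G\in\Lambda_2$. Hence every graph in $\Lambda_2$ is $F$-free and {\sf odd-holes}-free, which is the claim.

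I expect no genuine obstacle here, because the real difficulty is concentrated in the lemmas themselves rather than in their assembly: the explicit $3\times 3$ minor computations, the lexicographic Gr\"obner basis verifications, the case splits over the undetermined distances $y_i\in\{2,3\}$ (or $\{2,3,4\}$) inside the six-vertex graphs, and above all the long inductive argument showing that a graph containing a large induced odd cycle must contain a shorter induced odd cycle or one of \Ga, \Gd, \Gg, \Gl, $\Gk$. The single point that deserves to be written out carefully is the passage through the definitions — that "$G\notin\Lambda_2$" unfolds to "$I_3(G,X_G)$ is trivial", and that membership of $H$ in ${\sf Forb}_2$ is precisely what propagates forbiddenness from $H$ to every graph containing it as an induced subgraph.
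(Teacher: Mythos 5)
Your proposal is correct and follows exactly the route the paper takes: the theorem is stated as an immediate consequence of the preceding lemmas, which collectively place every graph of $F$ and every odd-hole in ${\sf Forb}_2$, and the definition of ${\sf Forb}_2$ does the rest. Your only addition is to spell out the unwinding of the definitions in step (iii), which the paper leaves implicit.
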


The remaining difficult question is: are all $\{F,\text{odd-holes}\}$-free graphs in $\Lambda_2$?

% \begin{proposition}
%     Let $G$ be a graph with at most 8 vertices.
%     $G$ is in $\Lambda_2$ if and only if $G$ is $\{F,\text{odd-holes}\}$-free.
% \end{proposition}

\section*{Acknowledgement}
The author would like to thank Jephian C.-H. Lin for helpful comments.
This research was partially supported by SNI and CONACyT.

\end{document}